\numberwithin{equation}{section}
\newcommand{\jap}[1]{\langle #1 \rangle}
\def\a{\alpha}
\def\b{\beta}
\def\c{\gamma}
\def\d{\delta}
\def\e{\varepsilon}
\def\f{\varphi}
\def\l{\lambda}
\def\m{\mu}
\def\n{\nu}
\def\s{\sigma}
\def\x{\xi}
\def\y{\eta}
\def\Fd{\mathcal{F}_d}
\def\re{\mathbb{R}}
\def\ze{\mathbb{Z}}
\def\T{\mathbb{T}}
\def\pa{\partial}
\renewcommand{\Im}{\text{{\rm Im}\;}}
\newcommand{\supp}{\text{{\rm supp}\;}}
\newcommand{\sgn}{\mathrm{sgn}}
\newtheorem{thm}{Theorem}[section]
\newtheorem{lem}[thm]{Lemma}
\newtheorem{prop}[thm]{Proposition}
\theoremstyle{definition}
\theoremstyle{remark}
\newtheorem{rem}[thm]{Remark}
\title{Uniform resolvent estimates for the discrete Schr\"odinger operator in dimension three}
\author{Kouichi Taira, \thanks{Research Organization of Science and Technology, Ritsumeikan university, e-mail:20v00029@gst.ritsumei.ac.jp}}
\date{}
\begin{document}

\maketitle

\begin{abstract}
In this note, we prove the uniform resolvent estimate of the discrete Schr\"odinger operator with dimension three. To do this, we show a Fourier decay of the surface measure on the Fermi surface.
\end{abstract}

\section{Introduction}

We consider the three-dimensional discrete Laplacian
\begin{align*}
H_0 u(x)=-\sum_{|x-y|=1}(u(y)-u(x)).
\end{align*} 
We denote the Fourier expansion by $\Fd$:
\begin{align*}
\hat{u}(\x)=\mathcal{F} u(\x)=\sum_{x\in \ze^3}e^{-2\pi i x\cdot\x}u(x),\quad \x\in \mathbb{T}^3=\re^3/\ze^3.
\end{align*}
Then it follows that
\begin{align}\label{multh_0}
\Fd H_0 u(\x)=h_0(\x)\Fd u(\x), \,\, h_0(\x)=4\sum_{j=1}^3\sin^{2}(\pi \x_j).
\end{align}
We denote the set of the critical points of $h_0$ by $\mathrm{Cr}(h_0)$:
\begin{align}\label{crih_0}
\mathrm{Cr}(h_0)=\{\x\in \T^3\mid \nabla h_0(\x)=0\}=\{\x\in \T^3\mid \x_j\in \{0, 1/2\},\, j=1,2,3\}.
\end{align}
We call $\x\in \mathrm{Cr}(h_0)$ an elliptic threshold if $\x$ attains maximum or minimum of $h_0$ and a hyperbolic threshold otherwise. We set $M_{\l}=h_0^{-1}(\{\l\})$ for $\l\in[0,12]$. The set $M_{\l}$ is called the Fermi surface.

In this note, we show the uniform resolvent estimates for discrete Schr\"odinger operator with dimension three. In case of the continuous Laplacian $-\Delta$ on $\re^d$, the following uniform resolvent estimates are known (\cite{KRS}, \cite{KY}):
\begin{align}\label{unifeuc}
\| (-\Delta-z)^{-1}f\|_{L^q(\re^d)}\leq C|z|^{\frac{d}{2}(\frac{1}{p}-\frac{1}{q})-1}  \|f\|_{L^p(\re^d)}\quad \text{for}\quad z\in \mathbb{C}\setminus [0,\infty),
\end{align}
where $d\geq 3$ and
\begin{align*}
\frac{2}{d+1}\leq \frac{1}{p}-\frac{1}{q}\leq \frac{2}{d},\,\, \frac{2d}{d+3}<\frac{1}{p}<\frac{2d}{d+1},\,\, \frac{2d}{d-1}<\frac{1}{q}<\frac{2d}{d-3}.
\end{align*}
Moreover, it turns out that $(-\Delta-z)^{-1}$ is uniformly bounded in $B(L^p(\re^d), L^{p'}(\re^d))$ with respect to $z\in \mathbb{C}\setminus [0,\infty)$ if and only if $p=2d/(d+2)$. On the other hand, in \cite[Theorem 1.7 (iii)]{TT} (see also Lemma \ref{unifequi}), it is shown that the resolvent $R_0(z)=(H_0-z)^{-1}$ for the discrete Schr\"odinger operator is not bounded from $l^p(\ze^d)$ to $l^{p'}(\ze^d)$ with $p=2d/(d+2)$, $p'=p/(p-1)$ and with $d\geq 5$.
The result in \cite[Proposition 3.3]{TT} shows that the resolvent $R_0(z)$ satisfies
\begin{align}\label{disuniin}
\| R_0(z)f\|_{l^{p'}(\ze^d)}\leq C \|f\|_{l^p(\ze^d)}\quad \text{for}\quad z\in \mathbb{C}\setminus [0,4d],\,\, 1\leq p\leq \frac{2d}{d+3},\,\, d\geq 4
\end{align}
The natural questions are the following:
\begin{itemize}
\item Is the estimate $(\ref{disuniin})$ optimal?
\item What about  the case of $d=3$?
\end{itemize}
For the latter, the authors in \cite{KM} showed that $(\ref{disuniin})$ hold for $p\in [1,\frac{12}{11})$ and for $d=3$ (see also Lemma \ref{unifequi}). In this paper, we improve their results and give the resolvent estimates which is sharp away from the threshold energies.

The proof of $(\ref{disuniin})$ in \cite{TT} depends on the endpoint Strichartz estimates (\cite{SK}).
We point out that the endpoint Strichartz estimates for discrete Schr\"odinger operators might not be used for the sharp resolvent estimate with dimension three since the Strichartz estimates in \cite{SK} are sharp. This is different from the case of the continuous Laplacian $-\Delta$ (in this case, the endpoint Strichartz estimates implies the sharp resolvent estimate $(\ref{unifeuc})$ with $p=2d/d+2$ and with $q=p'$). Instead, we use the strategy in \cite{C2} and calculate the Fourier decay of the surface measure for the Fermi surface. In \cite{ES}, the Fourier decay away from the umbilic points (the points where all principal curvatures vanish) are studied. In this paper, we improve this result and also deal with the Fourier decay near the umbilic point. For its application to the random Schr\"odinger operators, see \cite{ES} and references therein.

The main result of this paper is the following theorem. 

\begin{thm}\label{mainthm} $(i)$ $($Resolvent estimates away from the thresholds$)$ Let $p\in [1,\frac{5}{4}]$ and $r\in [1,\frac{10}{3}]$.
For $\e>0$, set 
\begin{align*}
D_{\e}=\bigcap_{k=0}^3\{z\in \mathbb{C}\mid |z-4k|\geq \e\}.
\end{align*}
Then the resolvent $R_0(z)=(H_0-z)^{-1}$ satisfies
\begin{align*}
\sup_{z\in D_{\e}\setminus \re}\|R_0(z)\|_{B(l^p(\ze^3), l^{p'}(\ze^3))}<\infty,\quad \sup_{z\in D_{\e}\setminus \re,\,\, \|W_j\|_{l^r(\ze^3)}=1}\|W_1R_0(z)W_2\|_{B(l^2(\ze^3))}<\infty.
\end{align*}

\noindent $(ii)$ $($Resolvent estimates near the thresholds$)$ Let $p\in [1,\frac{6}{5}]$ and $r\in [1,3]$.
Then we have
\begin{align*}
\sup_{z\in \mathbb{C}\setminus \re}\|R_0(z)\|_{B(l^p(\ze^3), l^{p'}(\ze^3))}<\infty,\quad \sup_{z\in \mathbb{C}\setminus \re,\,\, \|W_j\|_{l^r(\ze^3)}=1}\|W_1R_0(z)W_2\|_{B(l^2(\ze^3))}<\infty.
\end{align*}

\end{thm}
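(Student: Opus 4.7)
The plan is to follow the approach of \cite{C2}: reduce the resolvent estimates to a Fourier decay bound for the surface measure on the Fermi surface $M_\l$, and then prove such decay by stationary phase. By Lemma \ref{unifequi}, the unweighted $l^p$--$l^{p'}$ estimate and the weighted $l^2$ estimate in each part of Theorem \ref{mainthm} are equivalent, so it suffices to prove either one.

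First, conjugation by $\Fd$ turns $R_0(z)$ into the Fourier multiplier $(h_0(\x) - z)^{-1}$, and Stone's formula combined with the Sokhotski--Plemelj identity gives, for $\l \in (0,12) \setminus \mathrm{Cr}(h_0)$,
\begin{align*}
\frac{1}{2\pi i}\bigl(R_0(\l + i0) - R_0(\l - i0)\bigr)(x,y) = K_\l(x-y), \qquad K_\l(x) = \int_{M_\l} e^{2\pi i x \cdot \x} \frac{dS(\x)}{|\nabla h_0(\x)|}.
\end{align*}
A Kenig--Ruiz--Sogge / Tomas--Stein style argument then shows that a uniform Fourier decay bound $|K_\l(x)| \leq C(1+|x|)^{-\a}$, together with complex interpolation against the trivial $l^2$ bound and a standard cutoff for the principal-value part, yields $R_0(z)\colon l^p \to l^{p'}$ with $p = (2\a+2)/(\a+2)$. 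Thus the exponent $\a = 2/3$ produces the range $p \le 5/4$ in part (i) and $\a = 1/2$ the range $p \le 6/5$ in part (ii); the weighted statements follow through Lemma \ref{unifequi}.

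The main technical step is therefore the pointwise bound on $K_\l$. For $\l$ in a compact subset of $(0,12) \setminus \{4,8\}$, $M_\l$ is a smooth compact surface and stationary phase immediately gives the improved bound $|K_\l(x)| \lesssim (1+|x|)^{-1}$ at every point of nonvanishing Gauss curvature, so the delicate region is a neighborhood of the umbilic points where at least one principal curvature of $M_\l$ vanishes. At each such point I would parametrize $M_\l$ as a graph $\x_3 = \f(\x_1, \x_2)$, use the Taylor expansion of $\f$ to identify the order of vanishing of the curvature, and apply a van der Corput / Duistermaat-type estimate; a finite-type analysis gives the exponent $\a = 2/3$, with uniformity in $\l$ obtained from smooth dependence and compactness.

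Finally, for part (ii) one must also treat $\l$ near the critical values $\{0,4,8,12\}$. Near the elliptic thresholds $\{0,12\}$, $h_0$ is a Morse critical value and $M_\l$ is a small ellipsoid; a scaling argument reduces the Fourier decay to that of the spherical surface measure, giving the better exponent $\a = 1$. I expect the genuinely difficult case to be $\l$ near a hyperbolic threshold, say $\l = 4$: by the Morse lemma $h_0 - 4$ is locally equivalent to $\x_1^2 + \x_2^2 - \x_3^2$, and $M_\l$ develops a conical singularity as $\l \to 4$. The expected decay $(1+|x|)^{-1/2}$ should come from a one-dimensional stationary phase in the radial variable, analogous to the Fourier transform of the surface measure on the forward light cone, and the main obstacle will be to make this estimate uniform across the transition from the smooth regime to the fully conical regime, and to patch the resulting local bounds via a smooth partition of unity on $\T^3$ without losing powers of $\l - 4$.
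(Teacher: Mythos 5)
Your overall skeleton for part (i) matches the paper's: reduce the resolvent bound to Fourier decay of the Fermi-surface measure (the paper does this via Proposition \ref{Fouuni}, which is [C2, Prop.\ A.5] adapted to $\T^3$), compute the decay by analyzing the degenerate points of $M_\l$, and pass between the unweighted and weighted estimates by Lemma \ref{unifequi}. Your exponent bookkeeping $p=(2\a+2)/(\a+2)$ matches the paper's $r=2+2k$, $1/p=1/2+1/r$. You do not, however, identify the actual technical engine: at the umbilic points (which occur only on $M_6$, by Proposition \ref{Gauvanish}) the Hessian of the graph function vanishes completely and a ``finite-type analysis'' is not enough off the shelf; the paper computes the Taylor expansion (Proposition \ref{graphrep}) and feeds the Newton-polyhedron data into the Ikromov--M\"uller theorem [IM, Thm.\ 1.1] (Proposition \ref{osidecay}) to get the $|x|^{-2/3}$ decay, with the auxiliary Lemma \ref{elzero} checking adaptedness of coordinates. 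Without this or an equivalent mechanism the $\a=2/3$ claim is unsupported.

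For part (ii) your route diverges from the paper's and contains a genuine gap. You propose to push the Fourier-decay strategy all the way up to the threshold energies: rescale near the elliptic thresholds to a small sphere, and near the hyperbolic thresholds treat $M_\l$ as a perturbed light cone, then ``make the estimate uniform across the transition.'' The obstruction you yourself flag is real and is not easily repaired: Proposition \ref{Fouuni} requires the surface-measure decay $\sup_x (1+|x|)^k |\widehat{\chi\, d\s_{M_\l}}(x)|<\infty$ with a constant \emph{uniform} in $\l$, and for a cutoff $\chi$ supported near a critical point of $h_0$ this fails --- near an elliptic threshold the surface $M_\l$ shrinks to a point, and near a hyperbolic one it develops a conical singularity, so the constant in the pointwise decay blows up as $\l$ approaches $0,4,8,12$ (in particular your claim that the elliptic case is handled by ``a scaling argument giving $\a=1$'' ignores the $\l$-dependence of the constant). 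The paper avoids this entirely: near each nondegenerate critical point it abandons the surface-measure/Fourier-decay mechanism and instead uses the fact that the localized propagator $\chi(D)e^{-itT(D)}$ satisfies the $l^1\to l^\infty$ dispersive bound $\jap{t}^{-3/2}$ by stationary phase (the Hessian of $h_0$ is nondegenerate there), feeds this into the Keel--Tao endpoint Strichartz theorem, and integrates in $t$ to get the uniform $l^{6/5}\to l^{6}$ resolvent bound directly (Proposition \ref{propthre}, following [TT, Prop.\ 3.3] and [SK]). This is a different and more robust idea than a direct cone analysis, and it is what makes the constants genuinely $z$-independent in a full neighborhood of each threshold. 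Your sketch as written would still leave the uniformity unproved.

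One smaller omission: part (ii) also requires the decay rate $k=1/2$ on $M_4$ and $M_8$ \emph{away} from the critical points, which in the paper is Theorem \ref{mainpropuni}(iv); this is straightforward stationary phase because at least one principal curvature survives there (Proposition \ref{Gauvanish}(ii)), but your sketch does not separate this from the critical-point analysis, and both pieces are needed before the partition of unity closes the argument.
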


\begin{rem}
\cite[Theorem 1.11 (iii)]{TT} shows that the range of $p$ and $r$ in $(i)$ are optimal.
\end{rem}

\begin{rem}
The above results are proved in \cite{KM} for $p\in [1,\frac{12}{11})$ and $r\in [1, \frac{12}{5})$ (see Lemma \ref{unifequi}).
\end{rem}

\begin{rem}
More generally, it follows from \cite[Theorem 1.2 (i)]{T} that the uniform resolvent estimates away form the diagonal line hold, that is,
\begin{align*} 
\sup_{z\in D_{\e}\setminus \re}\|R_0(z)\|_{B(l^p(\ze^3), l^{q}(\ze^3))}<\infty,
\end{align*}
for
\begin{align*}
\frac{3}{5}\leq \frac{1}{p}-\frac{1}{q},\quad \frac{5}{7}<\frac{1}{p},\quad \frac{1}{q}<\frac{2}{7}.
\end{align*}
Moreover, \cite[Theorem 1.2 (ii)]{T} implies that $R_0(z)$ is H\"older continuous on $B(l^p(\ze^3), l^{p'}(\ze^3))$ for $1\leq p<5/4$. From this result and the proof of \cite[Theorem 1.9]{TT}, it is expected that the wave operators $W_{\pm}=\lim_{t\to \pm \infty}e^{itH}e^{-itH_0}$ exist and is complete for $H=H_0+V$ with $V\in l^{\frac{5}{3}}(\ze^3)$. We omit the detail.
\end{rem}

Finally, we state a possible conjecture on the resolvent estimates near the threshold energies. The author expects that for $p\in [1,\frac{5}{4}]$, the following estimates hold:
\begin{align}\label{conjuni}
\|R_0(z)f\|_{l^{p'}(\ze^3)}\leq C\left(\prod_{k=0}^3|z-4k|^{\frac{3}{2}(\frac{1}{p}-\frac{1}{p'})-1} \right)\|f\|_{l^p(\ze^3)}, \quad \text{for}\quad z\in \mathbb{C}\setminus \re.
\end{align}
By virtue of Proposition \ref{propthre} and \cite[Proposition A.11]{C2}, in order to prove $(\ref{conjuni})$, we only need to prove
\begin{align*}
\|\chi(D)R_0(z)f\|_{l^{p'}(\ze^3)}\leq C\left(\prod_{k=0}^3|z-4k|^{\frac{3}{2}(\frac{1}{p}-\frac{1}{p'})-1} \right)\|f\|_{l^p(\ze^3)}, \quad \text{for}\quad z\in \mathbb{C},
\end{align*}
where $\chi\in C^{\infty}(\mathbb{T}^3)$ is supported around $\x_0$ with $\x_0\in (M_{4}\cup M_{8})\setminus \mathrm{Cr}(h_0)$. The estimates $(\ref{conjuni})$ can be applied with the Keller type eigenvalue bounds for three dimensional discrete Schr\"odinger operators with complex potentials (see \cite{F} for the continuous Laplacian).

\noindent
\textbf{Acknowledgment.}  
This work was partially supported by JSPS Research Fellowship for Young Scientists, KAKENHI Grant Number 17J04478 and 20J00221. Moreover, it is also supported by the program FMSP at the Graduate School of Mathematics Sciences, the University of Tokyo. The author would like to thank Kenichi Ito and Shu Nakamura for encouraging to write this paper. The author is grateful to J.C. Cuenin and I.A. Ikromov for pointing out a mistake of the earlier version of this manuiscript.

\section{Preliminary, reduction to the Fourier decay of the surface measure}

\subsection{Uniform resolvent estimates near thresholds}

To obtain uniform resolvent estimates near thresholds, we only need to the argument in \cite[Proposition 3.3]{TT} slightly.

\begin{prop}\label{propthre}
Let $d\geq 3$ and $T:\mathbb{T}^d\to \re$ be a smooth function with a non-degenerate critical point $\x_0$ with corresponding energy $\l_0$. Then there exists $\d>0$ such that for $\chi\in C_c^{\infty}(B_{\d}(\x_0))$ and for $r\in [1,d]$, we have
\begin{align}\label{Threresol}
\|W_1\chi(D)^2(T(D)-z)^{-1}W_2\|_{B(l^2(\ze^d))}\leq C\|W_1\|_{l^{r}(\ze^d)}\|W_2\|_{l^{r}(\ze^d)}.
\end{align}
with a constant independent of $W_1, W_2 \in l^{r}(\ze^d)$ and $z\in \mathbb{C}\setminus \re$.
\end{prop}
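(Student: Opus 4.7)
The plan is to mirror the strategy of \cite[Proposition 3.3]{TT} but with everything localized near the non-degenerate critical point $\x_0$, and then to invoke Kato's smoothing theorem to pass from a Strichartz estimate to the uniform resolvent bound.

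\textbf{Step 1 (Dispersive estimate).} I would first apply the Morse lemma on a small neighborhood $U$ of $\x_0$ to change coordinates so that $T-\l_0$ becomes a non-degenerate quadratic form. Shrinking $\d$ if necessary so that $\supp\chi \subset U$ and $\nabla^2 T$ is non-degenerate on $\supp\chi$, a standard stationary-phase argument applied to
\[
\int_{\mathbb{T}^d} e^{2\pi i(x-y)\cdot\x - i t T(\x)} \chi(\x)^2\, d\x
\]
(splitting into stationary region $|x-y|\lesssim |t|$ and non-stationary region $|x-y|\gtrsim |t|$, where repeated integration by parts is used) yields the dispersive estimate
\[
\|\chi(D)^2 e^{-itT(D)}\|_{B(l^1(\ze^d),\, l^\infty(\ze^d))} \leq C(1+|t|)^{-d/2}, \qquad t\in\re.
\]

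\textbf{Step 2 (Endpoint Strichartz and Kato smoothness).} Combined with the trivial $l^2$-bound $\|e^{-itT(D)}\|=1$, Keel--Tao's endpoint Strichartz theorem (applicable since $d\geq 3$) yields
\[
\|\chi(D)e^{-itT(D)} f\|_{L^2_t\, l^{2d/(d-2)}_x} \leq C\|f\|_{l^2}.
\]
H\"older's inequality with exponent identity $\frac{1}{2}=\frac{1}{d}+\frac{d-2}{2d}$ then gives, for any $W\in l^d(\ze^d)$,
\[
\|W\chi(D) e^{-itT(D)} f\|_{L^2_t\, l^2_x} \leq \|W\|_{l^d}\|\chi(D) e^{-itT(D)} f\|_{L^2_t\, l^{2d/(d-2)}_x} \leq C\|W\|_{l^d}\|f\|_{l^2}.
\]
By Kato's smoothing theorem, this $L^2_t$-bound is equivalent to
\[
\sup_{z\in\mathbb{C}\setminus\re} \|W\chi(D)^2(T(D)-z)^{-1}\bar W\|_{B(l^2(\ze^d))} \leq C\|W\|_{l^d}^2.
\]

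\textbf{Step 3 (Two weights and range of $r$).} To handle distinct $W_1,W_2$ I would apply the two-operator version of Kato smoothness, obtained from the polarization identity
\[
4A_1 B A_2^{*} = \sum_{k=0}^{3} i^{k}(A_1+i^k A_2) B (A_1+i^k A_2)^{*},
\]
with $A_j=\bar W_j \chi(D)$ (so that $A_j^{*}=\chi(D)W_j$) and $B=(T(D)-z)^{-1}$; each of the four terms is controlled by Step 2, yielding
\[
\|W_1\chi(D)^2(T(D)-z)^{-1}W_2\|_{B(l^2)} \leq C\|W_1\|_{l^d}\|W_2\|_{l^d}.
\]
Finally, because on $\ze^d$ one has the reverse inclusion $\|W\|_{l^d}\leq \|W\|_{l^r}$ for every $r\leq d$, the estimate at $r=d$ automatically upgrades to all $r\in[1,d]$, which gives \eqref{Threresol}.

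\textbf{Main obstacle.} The only substantive technical point is the localized dispersive estimate in Step 1: non-degeneracy of $\nabla^2 T$ at $\x_0$ must be converted (via Morse coordinates and careful splitting of the $\x$-integral into stationary/non-stationary regions) into a bound uniform in $x-y$. The remaining ingredients — endpoint Strichartz, Kato smoothness, polarization, and the $\ell^r\subset \ell^d$ embedding on $\ze^d$ — are by now standard and plug in without modification; this is precisely the sense in which the argument is a ``slight modification'' of \cite[Proposition 3.3]{TT}, the modification being the localization via $\chi(D)$.
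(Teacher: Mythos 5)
Your Step 1 (localized dispersive estimate via Morse coordinates and stationary phase) and your Step 3 (polarization plus the $l^r\subset l^d$ embedding) are fine and closely parallel the paper's. The problem is Step~2: the passage from the \emph{homogeneous} Strichartz estimate to the uniform resolvent bound via ``Kato's smoothing theorem'' is not valid. Kato's characterization of $H$-smoothness of an operator $A$ controls only the imaginary part of the resolvent, $\sup_{z\notin\re}\|A\,\Im(H-z)^{-1}A^*\|$; the full bound $\sup_{z\notin\re}\|A(H-z)^{-1}A^*\|$ is strictly stronger and does \emph{not} follow from $H$-smoothness alone. A concrete counterexample: on $\mathcal{H}=L^2([0,1])$ with $H$ multiplication by $\l$, take $A$ the rank-one operator $Af=\int_0^1 f\,d\l$. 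Then $\int_\re\|Ae^{-itH}f\|^2dt=2\pi\|f\|^2$, so $A$ is $H$-smooth, yet $A(H-z)^{-1}A^*=\int_0^1\frac{d\l}{\l-z}$, which diverges like $\log\frac{1}{|z|}$ as $z\to 0$ approaches the edge of the spectrum. Thus the homogeneous-Strichartz/Kato-smoothness route, as you state it, does not produce the claimed uniform-in-$z$ estimate.

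The paper avoids this precisely by using the \emph{inhomogeneous} endpoint Strichartz estimate from Keel--Tao rather than just the homogeneous one. Applying \cite[Theorem~1.2]{KT} to $U(t)=1_{[0,T)}(t)\chi(D)e^{-itT(D)}$ gives a bound for solutions of the forced equation $i\pa_t u-T(D)u=g$, $u(0)=u_0$:
\begin{align*}
\|\chi(D)^2u\|_{L^2([0,T),l^{2^*})}\leq C\|u_0\|_{l^2}+C\|g\|_{L^2([0,T),l^{2_*})}.
\end{align*}
One then feeds in the explicit exponential-in-time solution $u(t,x)=e^{itz}u_0(x)$ with $u_0=(T(D)-z)^{-1}f$ and $g(t,x)=e^{itz}f(x)$, for $\Im z<0$. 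Because $\c(T)=(\int_0^T|e^{itz}|^2dt)^{1/2}\geq\sqrt{T}\to\infty$, dividing by $\c(T)$ and letting $T\to\infty$ kills the contribution from $\|u_0\|_{l^2}$ and leaves exactly $\|\chi(D)^2(T(D)-z)^{-1}f\|_{l^{2^*}}\leq C\|f\|_{l^{2_*}}$; the weighted form \eqref{Threresol} then follows from Lemma~\ref{unifequi}. The retarded/inhomogeneous information encoded in the Keel--Tao theorem is exactly the extra ingredient (essentially a Hilbert-transform bound on the spectral density) that homogeneous smoothness lacks, and it is what prevents the log-loss at the edges of the spectrum. To repair your proof you would need to replace Step~2 by this inhomogeneous-Strichartz substitution argument (or, equivalently, invoke a genuine double/retarded smoothing estimate rather than plain Kato smoothness).
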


\begin{rem}
In \cite[Proposition A.11]{C2}, it is proved that
\begin{align*}
\|W_1\chi(D)^2(T(D)-z)^{-1}W_2\|_{B(L^2)}\leq C|z-\l_0|^{\frac{d}{r}-1}G_r(z)\|W_1\|_{L^{r}}\|W_2\|_{L^{r}},
\end{align*}
for $r\in [d,d+1]$, where 
\begin{align*}
G_r(z)=\begin{cases}
|\log|z-\l_0||  \quad  \text{if $\x_0$ is a saddle point and if $r=d$}, \\
1\quad \text{otherwise}.
\end{cases}
\end{align*}
Proposition \ref{propthre} improves this result when $\x_0$ is a saddle point and when $r=d$, although in \cite[Proposition A.11]{C2}, the Shatten norm estimates are shown. For the results on exact ultrahyperbolic operators, see \cite{JKL}.
\end{rem}

\begin{proof}
We shall slightly modify the argument in \cite[Proposition 3.3]{TT}. We may assume $\Im z<0$.
Since $l^{p_1}(\ze^d)\subset l^{p_2}(\ze^d)$ for $p_1\leq p_2$, we may assume $r=d$. We take $\d>0$ small enough such that the Hessian of $T(\x)$ does not vanish on $B_{2\d}(\x_0)$. Then for $\chi\in C_c^{\infty}(B_{\d}(\x_0))$, the stationary phase theorem implies
\begin{align*}
\|\chi(D)e^{-itT(D)}\|_{B(l^1(\ze^d), l^{\infty}(\ze^d))}\leq C\jap{t}^{-\frac{d}{2}},
\end{align*}
where we note that the singularity at $t=0$ does not occur by virtue of the compactness of $\supp \chi$ (see the proof in \cite[Theorem 3]{SK}). Applying \cite[Theorem 1.2]{KT} with $U(t)=1_{[0,T)}(t)\chi(D)e^{-itT(D)}$, it follows that the unique solution $u(t,x)$ to
\begin{align}\label{Schro}
i\pa_tu(t,x)-T(D)u(t,x)=g(t,x),\quad u(0,x)=u_0(x)\in l^2(\ze^d)
\end{align}
satisfies
\begin{align*}
\|\chi(D)^2u\|_{L^2([0,T),l^{2^*}(\ze^d) )}\leq C\|u_0\|_{L^2(\ze^d)}+ C\|g\|_{L^2([0,T), l^{2_*}(\ze^d))},
\end{align*}
where $2^*=2d/(d-2)$ and $2_*=2d/(d+2)$. 

Let $f$ be a finitely supported function. Set $g(t,x)=e^{itz}f(x)$, $u_0(x)=(T(D)-z)^{-1}f(x)$ and $u(t,x)=e^{itz}u(x)$. Since $u(t,x)$ and $g(t,x)$ satisfy $(\ref{Schro})$, we have
\begin{align*}
\c(T)\|\chi(D)^2u_0\|_{l^{2^*}(\ze^d) )}\leq C\|u_0\|_{L^2(\ze^d)}+ C\c(T)\|f\|_{l^{2_*}(\ze^d)},
\end{align*}
where $\c(T)=(\int_0^T|e^{it z}|^2 dt)^{1/2}$. Since $\Im z<0$, we have $\c(T)\geq\sqrt{T}$. By letting $T\to\infty$, we obtain 
\begin{align*}
\|\chi(D)^2(T(D)-z)^{-1}f\|_{l^{2^*}(\ze^d) )}\leq  C\|f\|_{l^{2_*}(\ze^d)}.
\end{align*}
Now Lemma \ref{unifequi} implies $(\ref{Threresol})$ for $\Im z<0$.
\end{proof}

\subsection{Uniform resolvent estimates away form thresholds}

We use the following propositions essentially due to the arguments in \cite[Proposition A.5]{C2} and \cite[Theorem 1.2]{T}. Although \cite[Proposition A.5]{C2} is stated only for a hypersurface in $\re^d$, its proof there can be applied with a hypersurface on $\mathbb{T}^d$.

\begin{prop}\label{Fouuni}
Let $d\geq 1$ and $M\subset \T^d$ be a hypersurface with normalized defining function $\rho:\T^d\to \re$. For $\chi\in C^{\infty}(\T^d)$ and $k>0$, assume that
\begin{align}\label{Foudecay}
\sup_{x\in \ze^d}(1+|x|)^k\widehat{\chi d\s_{M}(x)}<\infty,
\end{align}
where $d\s_{M}$ denotes the canonical surface measure on $M$. Then for $r\in [1,2+2k]$, we have
\begin{align*}
\|W_1\chi(D)(\rho(D)-z)^{-1}W_2\|_{B(l^2(\ze^d))}\leq C\|W_1\|_{l^{r}(\ze^d)}\|W_2\|_{l^{r}(\ze^d)}.
\end{align*}
with a constant independent of $W_1, W_2 \in l^{r}(\ze^d)$ and $z\in \mathbb{C}\setminus \re$.
\end{prop}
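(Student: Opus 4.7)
The plan is to adapt the proof of \cite[Proposition A.5]{C2} from $\re^d$ to the torus; the argument combines a Stein-Tomas $TT^*$-argument with a Kato smoothness reduction. By the discrete-group embedding $l^{r_1}(\ze^d)\hookrightarrow l^{r_2}(\ze^d)$ for $r_1\leq r_2$ (immediate from $\|W\|_{l^\infty}\leq \|W\|_{l^{r_1}}$), it suffices to treat the endpoint $r = 2+2k$.

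At this endpoint, I would first establish the spectral density estimate
\begin{align*}
\sup_\l \|W_1\chi(D)\d(\rho(D)-\l)\chi(D)W_2\|_{B(l^2(\ze^d))}\leq C\|W_1\|_{l^{2+2k}}\|W_2\|_{l^{2+2k}}.
\end{align*}
The operator $\chi(D)\d(\rho(D)-\l)\chi(D)$ is convolution on $\ze^d$ with the kernel $K_\l(x) = \widehat{(|\chi|^2/|\nabla\rho|)\,d\s_{M_\l}}(x)$, which satisfies $|K_\l(x)|\leq C(1+|x|)^{-k}$ by hypothesis (\ref{Foudecay}), extended from $\l = 0$ to nearby $\l$ by stability of the Fourier decay under small $C^\infty$-perturbations of the surface. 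A dyadic decomposition $K_\l = \sum_j \phi_j K_\l$ with $\supp \phi_j \subset \{|x|\sim 2^j\}$ gives the $l^1\to l^\infty$ bound $\|\phi_j K_\l \ast\|\leq C 2^{-jk}$ from the pointwise decay, and the $l^2\to l^2$ bound $\|\phi_j K_\l \ast\|\leq C 2^j$ by a Plancherel estimate on the $2^{-j}$-thickening of the surface measure. Riesz-Thorin interpolation at the Stein-Tomas exponent $p = 2(k+1)/(k+2)$ (with a real-interpolation step at the endpoint), summation in $j$, and H\"older's inequality with $l^{2+2k}$-weights then give the claimed spectral density bound.

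To pass from the spectral density bound to the uniform-in-$z$ resolvent bound, I would use the time-Fourier representation $(\rho(D)-z)^{-1}=\mp i\int_0^{\pm\infty}e^{-it(\rho(D)-z)}\,dt$ for $\pm\Im z > 0$, coupled with the Kato smoothness equivalence between the spectral density bound and the weighted Strichartz estimate
\begin{align*}
\|W\chi(D)e^{-it\rho(D)}f\|_{L^2_t l^2_x(\re\times \ze^d)}\leq C\|W\|_{l^{2+2k}}\|f\|_{l^2(\ze^d)}.
\end{align*}
A $TT^*$-argument parallel to the one in the proof of Proposition \ref{propthre} (with the Keel-Tao endpoint Strichartz playing the role of the Stein-Tomas step) then converts this Strichartz bound into the uniform resolvent estimate claimed.

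The main obstacle is the uniformity in $\l$ of the spectral density estimate: the hypothesis (\ref{Foudecay}) is stated only for the single surface $M = \rho^{-1}(0)$, whereas the Stein-Tomas step needs the same decay rate on nearby $M_\l$ with a uniform constant. In the applications of this proposition elsewhere in the paper this uniformity is automatic, since the Fourier decay is established by an explicit stationary phase analysis that simultaneously controls a one-parameter family of surfaces; in general it follows from smooth dependence of the principal curvatures of $M_\l$ on $\l$.
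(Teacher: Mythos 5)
The paper does not actually prove this proposition: it cites \cite[Proposition A.5]{C2} and \cite[Theorem 1.2]{T} and asserts that the proof carries over from $\re^d$ to $\T^d$. Your reconstruction of that argument --- reduction to the endpoint $r=2+2k$ by the embedding $l^{r}\hookrightarrow l^{2+2k}$, a Stein--Tomas-type spectral density estimate obtained from a dyadic decomposition of the kernel together with interpolation between the $l^1\to l^\infty$ bound $O(2^{-jk})$ and the $l^2\to l^2$ bound $O(2^{j})$, and the passage from the spectral density bound to the uniform resolvent bound via Kato smoothing theory --- captures the intended proof, and the numerology ($p=2(k+1)/(k+2)$, hence $r=2+2k$ by the H\"older duality of Lemma \ref{unifequi}) is correct.

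Two remarks. First, the parenthetical invoking the Keel--Tao endpoint Strichartz theorem is misplaced: Keel--Tao governs mixed $L^p_t l^q_x$ Strichartz estimates (as used in Proposition \ref{propthre}), whereas the relevant estimate here is purely $L^2_t l^2_x$, and the conversion of the smoothing bound into the uniform resolvent bound is the classical Kato--Yajima duality (cf.\ \cite{KY}); no Keel--Tao-type machinery is needed, and mentioning it risks conflating the two different routes used in this paper. Second, you are right that the uniformity of the Fourier decay over nearby level sets $\rho^{-1}(\l)$ is not literally contained in the hypothesis $(\ref{Foudecay})$, which is stated only for $M=\rho^{-1}(0)$; the spectral density argument, or equivalently the coarea decomposition of the resolvent kernel, genuinely requires this uniform control, and it does not follow from the single-surface hypothesis in general. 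In the applications (Theorem \ref{mainpropuni} applied through a partition of unity in both $\x$ and $\l$) the uniformity is supplied by the stationary-phase analysis itself, so the proposition is used correctly, but strictly the hypothesis should be understood as a locally uniform family statement.
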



By using a partition of unity, to prove Theorem \ref{mainthm}, it suffices to prove the following theorem.

\begin{thm}\label{mainpropuni}
Let $\l\in (0,12)$. We denote $M=M_{\l}$ and $\rho(\x)=h_0(\x)-\l$.

\noindent $(i)$ Let $\l\in (0,4)\cup (8,12)$ and $\x\in M_{\l}$. Then for any $\chi\in C^{\infty}(\T^3)$ supported close to $\x$, $(\ref{Foudecay})$ holds for $k=1$.

\noindent $(ii)$ Let $\l=6$ and $\x\in M_{\l}$. Then for any $\chi\in C^{\infty}(\T^3)$ supported close to $\x$, $(\ref{Foudecay})$ holds for $k=\frac{2}{3}$.

\noindent$(iii)$ Let $\l\in (4,8)\setminus \{6\}$ and $\x\in M_{\l}$. Then for any $\chi\in C^{\infty}(\T^3)$ supported close to $\x$, $(\ref{Foudecay})$ holds for $k=\frac{3}{4}$.

\noindent$(iv)$ Let $\l\in \{4,8\}$ and $\x\in M_{\l}\setminus \mathrm{Cr}(h_0)$. Then for any $\chi\in C^{\infty}(\T^3)$ supported close to $\x$, $(\ref{Foudecay})$ holds for $k=\frac{1}{2}$.

\end{thm}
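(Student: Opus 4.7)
The common strategy for all four parts is to use the implicit function theorem to represent $M_\l$ locally as a graph $\x_3 = \phi(\x_1,\x_2)$ (legitimate since $\nabla h_0 \neq 0$ on the relevant portion of $M_\l$: automatic in (i)--(iii) because $\l$ is not a critical value, and imposed in (iv) by $\x_*\notin\mathrm{Cr}(h_0)$), and thereby reduce $\widehat{\chi\, d\s_M}(x)$ to the two-dimensional oscillatory integral
\begin{equation*}
\widehat{\chi\, d\s_M}(x) = \int_{\re^2} a(\x')\, e^{-2\pi i\,(\x'\cdot x' + \phi(\x') x_3)}\, d\x', \qquad a\in C_c^\infty(\re^2).
\end{equation*}
The stationary points satisfy $\nabla\phi(\x')=-x'/x_3$ and the decay is governed by $D^2\phi$ (and, in the degenerate cases, its higher derivatives) at those points, i.e., by the principal curvatures of $M_\l$.

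For (i), the plan is to prove that the Gauss curvature of $M_\l$ is nowhere zero when $\l\in(0,4)\cup(8,12)$. Writing $h_0(\x) = 6 - 2\sum_{j}\cos(2\pi\x_j)$ so that $D^2h_0 = 8\pi^2\,\mathrm{diag}(\cos 2\pi\x_j)$, the Gauss curvature of $M_\l$ is proportional to $\det(D^2 h_0|_{T M_\l})/|\nabla h_0|^2$; a direct algebraic computation expresses this determinant as a polynomial in $(\sin 2\pi\x_j,\cos 2\pi\x_j)$, and strict positivity on $M_\l$ for $\l<4$ should follow from $h_0(\x)=\l$ and elementary trigonometric bounds. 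The range $\l>8$ reduces to $\l<4$ via the symmetry $\x\mapsto\x+(\tfrac12,\tfrac12,\tfrac12)$, under which $h_0\mapsto 12-h_0$. Standard stationary phase then gives $|\widehat{\chi\, d\s_M}(x)|\lesssim |x|^{-1}$, so $k=1$.

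Cases (ii)--(iv) involve degenerate stationary phase. In (ii), the eight umbilic points of $M_6$ are $\x_*=\tfrac14(\varepsilon_1,\varepsilon_2,\varepsilon_3)$, $\varepsilon_j\in\{\pm 1\}$, where $D^2h_0(\x_*)=0$. Taylor expansion gives
\begin{equation*}
h_0(\x_*+\eta)-6 = 4\pi\sum_j\varepsilon_j\eta_j - \tfrac{(2\pi)^3}{3}\sum_j\varepsilon_j\eta_j^3 + O(|\eta|^5),
\end{equation*}
and after the substitution $\tilde\eta_j=\varepsilon_j\eta_j$ the identity $a^3+b^3+c^3=3abc$ for $a+b+c=0$ shows that the cubic form restricted to the tangent plane $\sum_j\tilde\eta_j=0$ equals $3\tilde\eta_1\tilde\eta_2\tilde\eta_3$, which is nondegenerate (three distinct real linear factors). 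The rescaling $\eta\mapsto|x|^{-1/3}\eta$ converts the associated oscillatory integral to a bounded one with prefactor $|x|^{-2/3}$; elsewhere on $M_6$ the curvature is nonzero and gives $|x|^{-1}$, hence $k=2/3$. For (iii), the parabolic locus of $M_\l$ with $\l\in(4,8)\setminus\{6\}$ consists of curves along which $D^2\phi$ has rank one; in adapted coordinates one must verify $\phi(u,v)=\a v^2+\b u^4+\cdots$ with $\b\neq 0$, the crucial vanishing $\pa_u^3\phi=0$ of the cubic term following from the reflection symmetries $\x_j\mapsto -\x_j$ of $h_0$. Stationary phase in $v$ combined with Van der Corput's fourth-derivative test in $u$ then yields $|x|^{-1/2-1/4}=|x|^{-3/4}$. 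For (iv), at any $\x_*\in M_\l\setminus\mathrm{Cr}(h_0)$ with $\l\in\{4,8\}$, at most one principal curvature vanishes, producing $|x|^{-1/2}$ via one-variable stationary phase in the nondegenerate direction.

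The main obstacle is case (ii): converting the scaling heuristic into a rigorous uniform-in-$x$ bound, for instance through a dyadic decomposition into regions $\{|\eta|\sim 2^{-j}|x|^{-1/3}\}$ with nonstationary-phase estimates in the outer zones, or via resolution-of-singularities/Newton-polyhedron techniques for oscillatory integrals associated with the cubic form $\tilde\eta_1\tilde\eta_2\tilde\eta_3$. A secondary, related difficulty appears in (iii): proving that the cubic term in the flat direction truly vanishes--a statement not automatic at a generic parabolic point--requires a careful use of the lattice symmetries of $h_0$, and is the type of calculation where the earlier version of the manuscript (as acknowledged by the author) contained an error. Cases (i) and (iv) are comparatively routine once the curvature computations are in hand.
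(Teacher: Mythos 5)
Your overall strategy coincides with the paper's: write $M_\l$ locally as a graph, Taylor-expand the graphing function, and estimate the resulting two-dimensional oscillatory integral by stationary phase / Van der Corput / Newton-polyhedron (Ikromov--M\"uller) methods. Parts $(i)$, $(ii)$ and $(iv)$ match the paper in substance. In $(i)$ the paper's Proposition \ref{Gauvanish}$(i)$ proves $K\neq 0$ for $\l\in(0,4)\cup(8,12)$ by a short algebraic argument (when $K=0$ the numbers $a_j=\cos 2\pi\x_j$ are the roots of $t^3-E_\l t^2+E_\l a_1a_2a_3 t-a_1a_2a_3$, and $|a_j|\le 1$ forces $|E_\l|\le 1$); your ``elementary trigonometric bounds'' sketch is plausible but unfinished compared with this. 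In $(ii)$ your identification of the cubic form $3\tilde\y_1\tilde\y_2\tilde\y_3$ restricted to $\sum\tilde\y_j=0$ is exactly Proposition \ref{graphrep}$(i)$ in different notation, and you correctly flag that the scaling heuristic must be replaced by a rigorous Newton-polyhedron estimate, which is precisely Proposition \ref{osidecay}$(i)$ via \cite{IM}.

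There is a genuine gap in $(iii)$. You posit a uniform normal form $\phi(u,v)=\a v^2+\b u^4+\cdots$ along the entire parabolic locus, with the cubic term $\pa_u^3\phi$ killed ``by the reflection symmetries.'' This is structurally wrong, and it is exactly the delicate part of the paper. Proposition \ref{graphrep}$(ii)$ shows that at a \emph{generic} parabolic point the cubic term in the degenerate direction is \emph{nonzero} (giving the even better rate $O(|x|^{-5/6})$), and that vanishing of both $\a_1$ and $\a_{111}$ forces the point to lie in the finite set where $(a_1,a_2,a_3)$ is a permutation of $(0,0,E_\l)$; that implication is the content of Appendix \ref{appB} and is a nontrivial computation, not a symmetry observation. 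More seriously, your claim that $\b\neq 0$ is unjustified, and it is exactly where the danger lies: if the phase had the shape $(\y_2-c\y_1^2)^2+O(|\y|^5)$ (i.e.\ $\a_{1111}$ cancels the contribution from completing the square), the decay would only be $O(|x|^{-1/2})$, which is not enough for $k=3/4$; the remark after Proposition \ref{osidecay} makes this explicit. The paper closes the gap in Proposition \ref{graphrep}$(iii)$ by computing directly from $(\ref{fdif1})$, $(\ref{fthreedif})$ and the four-fold differentiated identity that, at the special points with $a_1=a_3=0$, one has $\a_1=\a_{111}=\a_{1111}=0$ together with $\a_{112}\neq 0$, so the adapted quartic coefficient is $-\tfrac{9\a_{112}^2}{4\a_2}\neq 0$. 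Without this two-step dichotomy (generic $\Rightarrow$ cubic nonzero; special $\Rightarrow$ $\a_{1111}=0$ and $\a_{112}\neq 0$), the $k=3/4$ bound is not established.
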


\begin{rem}
In \cite[Theorem 2.1]{ES}, $(iii)$ is proved for $r=\frac{3}{4}-\e$ for any $\e>0$ (more precisely, the estimates with a logarithmic loss). Our result $(iii)$ improves the result in \cite{ES}. 

\end{rem}

\begin{proof}[Proof of Theorem \ref{mainthm}]
Proposition \ref{Fouuni}, Theorem \ref{mainpropuni} $(i)$, $(ii)$, and $(iii)$ imply Theorem \ref{mainthm} $(i)$. Moreover, Proposition \ref{propthre} and Theorem \ref{mainpropuni} imply Theorem \ref{mainthm} $(ii)$.

\end{proof}

In the rest of this paper, we will prove Theorem \ref{mainpropuni}.

\section{Some oscillatory integrals}

In this section, we collect the results on the decay rate of some oscillatory integrals. It is regarded as generalization of the Van der Corput lemma in higher dimensions.  Oscillatory integrals of the following forms are studied in \cite{V}:
\begin{align*}
\int_{\re^2}\chi(\y)e^{i\l f(\y)}d\y\quad \text{as}\quad \l\to \infty.
\end{align*}
For our purpose, we need the decay rates for the Fourier transform of the surface measure. To do this, we use the recent result by Ikromov and  M\"uller \cite{IM}. To prove the decay of such integrals, we need the following elementary lemma.

\begin{lem}\label{elzero}
Let $\a,\b\in \re\setminus \{0\}$ and define $f_1,f_2,f_3:\mathbb{S}=\{\y\in \re^2\mid |\y|=1\}\to \mathbb{C}$ by
\begin{align*}
f_1(\y)=\a \y_1^2\y_2+\b \y_1\y_2^2,\,\, f_2(\y)=\a \y_1^3+\b \y_2^2,\,\, f_3(\y)=\a \y_1^2\y_2+\b\y_2^2.
\end{align*}
Then any zeros of the functions $f_1,f_2,f_3$ are simple.
\end{lem}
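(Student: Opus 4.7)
The statement asserts that for each $i$, at every zero of $f_i$ on the circle $\mathbb{S}=\{|\y|=1\}$, the tangential derivative does not vanish. The natural tangent vector field on $\mathbb{S}$ at $\y$ is $(-\y_2,\y_1)$, so I introduce the tangential derivative
\begin{align*}
Tf(\y) = -\y_2\,\partial_{\y_1}f(\y)+\y_1\,\partial_{\y_2}f(\y),
\end{align*}
and the task reduces to showing $Tf_i(\y)\neq 0$ whenever $\y\in\mathbb{S}$ and $f_i(\y)=0$. Since each $f_i$ is a polynomial in two variables of low degree, the zero sets on $\mathbb{S}$ can be enumerated explicitly.

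\textbf{Case $f_1$.} Factor $f_1(\y)=\y_1\y_2(\a\y_1+\b\y_2)$; the zeros on $\mathbb{S}$ split into the three classes $\y_1=0$, $\y_2=0$, and $\a\y_1+\b\y_2=0$. A direct computation gives
\begin{align*}
Tf_1(\y)=\a\y_1(\y_1^2-2\y_2^2)+\b\y_2(2\y_1^2-\y_2^2).
\end{align*}
Evaluating at $\y=(0,\pm 1)$ and $\y=(\pm 1,0)$ yields $\mp\b$ and $\pm\a$ respectively, both nonzero. For the third class, I parametrize $\y_1=\e\b/\sqrt{\a^2+\b^2}$, $\y_2=-\e\a/\sqrt{\a^2+\b^2}$ with $\e=\pm1$; substituting and collecting terms leads to $Tf_1(\y)=-\e\a\b(\a^2+\b^2)^{-1/2}$, which is nonzero since $\a,\b\neq 0$.

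\textbf{Case $f_2$.} Here
\begin{align*}
Tf_2(\y) = -3\a\y_1^2\y_2+2\b\y_1\y_2 = \y_1\y_2(-3\a\y_1+2\b).
\end{align*}
I first observe that any zero of $f_2$ on $\mathbb{S}$ satisfies $\y_1\y_2\neq 0$: if $\y_1=0$ then $f_2=\b\neq 0$, and if $\y_2=0$ then $f_2=\pm\a\neq 0$. Hence the only way $Tf_2(\y)$ could vanish at a zero of $f_2$ is $\y_1=2\b/(3\a)$; but substituting this into $f_2=0$ forces $\y_2^2=-8\b^2/(27\a^2)<0$, which is impossible for a real point of $\mathbb{S}$.

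\textbf{Case $f_3$.} Factor $f_3(\y)=\y_2(\a\y_1^2+\b\y_2)$ and compute
\begin{align*}
Tf_3(\y)=\y_1\bigl[\a(\y_1^2-2\y_2^2)+2\b\y_2\bigr].
\end{align*}
The zeros on $\mathbb{S}$ are either $\y_2=0$, giving $Tf_3(\pm 1,0)=\pm\a\neq 0$, or $\y_2=-\a\y_1^2/\b$ with $\y_1\neq 0$. Substituting the latter relation into $Tf_3$ gives $Tf_3(\y)=-\a\y_1^3(\b^2+2\a^2\y_1^2)/\b^2$, which is nonzero. This exhausts all cases and proves the lemma.

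\textbf{Main difficulty.} There is no real conceptual obstacle; the only thing to watch is the bookkeeping in the last substep of the $f_1$ case, where the cancellations reduce $Tf_1$ essentially to $\a\b(\a^2+\b^2)^{1/2}$, and the observation in the $f_2$ case that the relation $\y_1=2\b/(3\a)$ is incompatible with $f_2(\y)=0$ over the reals. The rest is elementary algebra.
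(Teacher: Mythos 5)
Your proof is correct and uses essentially the same approach as the paper: the tangential vector field $T=-\y_2\partial_{\y_1}+\y_1\partial_{\y_2}$ you introduce is precisely $d/d\theta$ under the parametrization $\y=(\cos\theta,\sin\theta)$ used in the paper, and both arguments verify by direct computation that this derivative does not vanish on the zero set. The only cosmetic difference is that the paper factorizes $f_1$ and $f_3$ in the $\theta$-variable (e.g.\ $f_1=\sqrt{\a^2+\b^2}\cos\theta\sin\theta\sin(\theta+\f)$) and reads off simplicity from distinctness of factors, whereas you enumerate the zeros in Cartesian form and evaluate $Tf_i$ at each.
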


\begin{proof}
We denote $\y_1=\cos\theta$ and $\y_2=\sin\theta$. Let $\f\in [0,2\pi)\setminus \{0,\frac{\pi}{2},\pi, \frac{3\pi}{2}\}$ be satisfying $\cos\f=\frac{\b}{\a^2+\b^2}$ and $\sin\f=\frac{\a}{\a^2+\b^2}$.
Then we write
\begin{align*}
f_1=&\cos\theta\sin\theta (\a\cos\theta+\b\sin\theta)=\sqrt{\a^2+\b^2}\cos\theta\sin\theta \sin (\theta+\f),\\
f_2=&\a\cos^3\theta+\b(1-\cos^2\theta),  \\
f_3=&-\a\sin^3\theta+\b\sin^2\theta+\a.
\end{align*}
Since the zeros of $\cos \theta$, $\sin \theta$ and $\sin (\theta+\f)$ are simple and since these zeros are distinct, it follows that the zeros of $f_1$ are simple. A simple calculation gives
\begin{align*}
\frac{df_2}{d\theta}=-\sin\theta \cos\theta(3\a \cos\theta-2\b).
\end{align*}
Thus we have
\begin{align*}
f_2(\theta)=\frac{df_2}{d\theta}(\theta)=0\Rightarrow \cos\theta=\pm\sqrt{3}.
\end{align*}
Since $|\cos\theta|\leq 1$, then the zeros of $f_2$ are simple. Finally, we have
\begin{align*}
f_3=-\a \sin\theta(\sin\theta-\frac{\b}{2\a}-\sqrt{\frac{\b^2}{4\a^2}+1})(\sin\theta-\frac{\b}{2\a}+\sqrt{\frac{\b^2}{4\a^2}+1})
\end{align*}
which only has simple zeros.
\end{proof}

The next proposition is a consequence of \cite[Theorem1.1]{IM}.

\begin{prop}\label{osidecay}
Let $f$ be a real-valued smooth function near $0\in \re^2$. For $\chi\in C_c^{\infty}(\re^d)$ supported near $0$, define
\begin{align*}
I(x)=\int_{\re^2}\chi(\y)e^{2\pi i(x_1\y_1+x_2\y_2+x_3f(\y))}d\y,\quad x\in \re^3.
\end{align*}
If the support of $\chi$ is close to $0$, the following holds.

\noindent $(i)$ Suppose that $f$ can be written as
\begin{align*}
f(\y)=f(0)+\sum_{j=1}^2\pa_{\y_j}f(0)\y_j+\a_{12}\y_1^2\y_2+\a_{21}\y_1\y_2^2+O(|\y|^4)\quad  \text{as}\quad |\y|\to 0
\end{align*}
with $\a_{12},\a_{21}\in \re\setminus \{0\}$. Then we have $I(x)=O(|x|^{-\frac{2}{3}})$ as $|x|\to \infty$.

\noindent$(ii)$ Suppose that $f$ can be written as
\begin{align*}
f(\y)=&f(0)+\sum_{j=1}^2\pa_{\y_j}f(0)\y_j+\a_{2}\y_2^2+\sum_{\substack{i+j+k=3\\ i\leq j\leq k}}\a_{ijk}\y_i\y_j\y_k\\
&+\sum_{\substack{i+j+k+m=4\\ i\leq j\leq k\leq m}}\a_{ijkm}\y_i\y_j\y_k\y_m+O(|\y|^5)\quad  \text{as}\quad |\y|\to 0
\end{align*}
with $\a_{2}\in \re\setminus \{0\}$ and $\a_{ijk}, \a_{ijkm}\in\re$. We assume
\begin{align*}
\a_{111}= 0\Rightarrow \a_{112}\neq 0\,\, \text{and}\,\, \a_{1111}= 0.
\end{align*}
Then we have $I(x)=O(|x|^{-\frac{5}{6}})$ as $|x|\to \infty$ if $\a_{111}\neq 0$ and $I(x)=O(|x|^{-\frac{3}{4}})$ as $|x|\to \infty$ otherwise.
\end{prop}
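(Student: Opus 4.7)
My plan is to reduce $I(x)$ to a single-parameter oscillatory integral and invoke the Newton-polyhedron theorem of Ikromov and M\"uller \cite[Theorem 1.1]{IM}, with Lemma \ref{elzero} supplying the required adaptedness (Varchenko R-)condition. Writing $x'=(x_1,x_2)$ and absorbing the linear-in-$\y$ part of $f$ into $x'$, the phase becomes $\Phi(\y)=x'\cdot\y+x_3g(\y)$ where $g=f-f(0)-\nabla f(0)\cdot\y$ has no constant or linear terms. If $|x_3|\le C|x'|$, the gradient $|\nabla_\y\Phi|=|x'+x_3\nabla g(\y)|$ is bounded below by $c|x'|\sim|x|$ on the small support of $\chi$ (since $\nabla g(0)=0$), and repeated integration by parts in $\y$ yields rapid decay $O(|x|^{-N})$. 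It thus suffices to treat the regime $|x_3|>C|x'|$; setting $\lambda=x_3$ and $\omega=x'/x_3$ with $|\omega|\le 1/C$ small and $|\lambda|\sim|x|$, the task is to bound
\[
J(\lambda;\omega)\coloneqq\int_{\re^2}\chi(\y)\,e^{2\pi i\lambda(\omega\cdot\y+g(\y))}\,d\y
\]
by $|\lambda|^{-2/3}$ in case (i), and by $|\lambda|^{-5/6}$ or $|\lambda|^{-3/4}$ (depending on whether $\a_{111}\neq 0$) in case (ii), uniformly for $|\omega|\le 1/C$.

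For case (i), $g(\y)=\a_{12}\y_1^2\y_2+\a_{21}\y_1\y_2^2+O(|\y|^4)$, whose Newton polyhedron has vertices $(2,1)$ and $(1,2)$ lying on the line $t_1+t_2=3$; hence the Newton distance is $d=3/2$. The restriction of the cubic part to the unit circle is precisely $f_1$ of Lemma \ref{elzero}, whose zeros are simple: this is the adaptedness condition of \cite{IM}. Consequently \cite[Theorem 1.1]{IM} applied to the phase $\omega\cdot\y+g(\y)$ (treating the linear term as an admissible small perturbation) gives $|J(\lambda;\omega)|\le C|\lambda|^{-1/d}=C|\lambda|^{-2/3}$.

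For case (ii), $\partial_{\y_2}^2 g(0)=2\a_2\neq 0$, so I apply stationary phase in $\y_2$ at the unique critical point $\y_2^*(\y_1;\omega)$ close to $0$. This yields a $|\lambda|^{-1/2}$ prefactor and reduces the problem to the 1D integral $\int\tilde\chi(\y_1)e^{2\pi i\lambda\Psi(\y_1;\omega)}d\y_1$ with reduced phase $\Psi=\omega_1\y_1+\omega_2\y_2^*+g(\y_1,\y_2^*)$. Using the envelope identity $\partial_{\y_1}\Psi=\partial_{\y_1}(\omega\cdot\y+g)|_{\y_2=\y_2^*}$ together with the expansion $\y_2^*(\y_1;\omega)=-\omega_2/(2\a_2)+O(|\omega|^2+\y_1^2)$, a direct computation gives the Taylor coefficients of $\Psi$ at $\y_1=0$: the $\y_1^3$ coefficient equals $\a_{111}+O(|\omega|)$, and in the subcase $\a_{111}=0$ (so $\a_{112}\neq 0$ and $\a_{1111}=0$ by hypothesis) the $\y_1^2$ and $\y_1^3$ coefficients are $O(|\omega|)$ while the $\y_1^4$ coefficient is $-\a_{112}^2/(4\a_2)+O(|\omega|)$, which is nonzero. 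The Van der Corput lemma for phases with bounded-below third (resp.\ fourth) derivative then yields $|\lambda|^{-1/3}$ (resp.\ $|\lambda|^{-1/4}$) for the 1D integral, uniformly in $\omega$, and combining with the $|\lambda|^{-1/2}$ factor produces $|\lambda|^{-5/6}$ or $|\lambda|^{-3/4}$.

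The hard part will be uniformity in $\omega$. In case (i), the critical point of $\omega\cdot\y+g$ shifts to some $\y_c(\omega)$ of size $|\omega|^{1/2}$, and the shifted phase acquires a Hessian of size $|\omega|^{1/2}$ that competes with the cubic: for $|\omega|\gtrsim|\lambda|^{-2/3}$ classical non-degenerate stationary phase at $\y_c$ already gives a better bound than $|\lambda|^{-2/3}$, while for $|\omega|\lesssim|\lambda|^{-2/3}$ the perturbation is too small to disrupt the Newton polyhedron structure and \cite[Theorem 1.1]{IM} gives the same rate as at $\omega=0$. An analogous dyadic split in $\omega$ handles the reduced 1D integral in case (ii), exploiting that the Van der Corput bound for phases of the form $\beta\y_1^4+\alpha\y_1^2+(\text{linear})$ is uniform in $\alpha$ provided $|\beta|$ is bounded below.
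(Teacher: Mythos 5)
Your plan announces the right tool -- \cite[Theorem 1.1]{IM} together with Lemma \ref{elzero} to verify adaptedness -- which is exactly what the paper uses, but you then superimpose a decomposition that is both unnecessary and, as you state it, logically flawed. Theorem 1.1 of \cite{IM} is a statement about $\widehat{\chi\,d\sigma}(\xi)$ for \emph{all} $\xi\in\re^3$: after normalizing $g=f-f(0)-\nabla f(0)\cdot\y$ so that $g(0)=0$, $\nabla g(0)=0$, the theorem gives the uniform bound $|I(x)|\lesssim(1+|x|)^{-1/h(g)}(\log(2+|x|))^{\nu(g)}$ with \emph{no} restriction to the regime $|x_3|\gg|x'|$. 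The deep content of \cite{IM} is precisely this uniformity in the direction $\xi/|\xi|$. The paper therefore simply computes $\pi(g)$, $d(g)$, $h(g)$, $\nu(g)$ (using Lemma \ref{elzero} to get $m(f_{\mathrm{pr}})=1<d(g)$, hence adaptedness) and quotes the theorem once; no splitting into $|x_3|\lessgtr C|x'|$ is needed, and no handling of a "perturbed phase $\omega\cdot\y+g(\y)$" is required.

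The actual gap is in your treatment of case (i). The sentence "[IM, Thm.\ 1.1] applied to the phase $\omega\cdot\y+g(\y)$ (treating the linear term as an admissible small perturbation) gives $|J(\lambda;\omega)|\le C|\lambda|^{-2/3}$" is not a valid application of that theorem. If one literally takes the phase $\tilde g_\omega(\y)=\omega\cdot\y+g(\y)$ with $\omega\neq 0$ fixed, its Newton polyhedron has a vertex on an axis at height $1$, so its Newton distance is $\le 1$, not $3/2$, and \cite{IM} would give a \emph{faster} asymptotic rate but with a constant that blows up as $\omega\to 0$ -- exactly the failure of uniformity that the surface-measure formulation avoids. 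Your later patch, dyadically splitting $|\omega|\gtrsim|\lambda|^{-2/3}$ versus $|\omega|\lesssim|\lambda|^{-2/3}$, is an attempt to re-derive a special case of \cite[Theorem 1.1]{IM} from scratch: the first branch ("nondegenerate stationary phase at $\y_c$ of size $|\omega|^{1/2}$ with Hessian of size $|\omega|^{1/2}$") requires nontrivial verification (existence and isolation of $\y_c$, a quantitative lower bound on $\det\nabla^2 g(\y_c)$ that survives the $O(|\y|^4)$ error, and the scaling condition making classical stationary phase applicable), and the second branch ("the perturbation is too small to disrupt the Newton polyhedron structure") is not a rigorous statement at all. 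Since the phase in (i) has no nondegenerate direction (the Hessian at $0$ vanishes identically), there is no elementary stationary-phase shortcut here; you must invoke the surface-measure theorem directly as the paper does.

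For case (ii), your route is genuinely different from the paper's and is workable: because $\a_2\neq 0$ supplies a nondegenerate direction, you can do stationary phase in $\y_2$ to peel off $|\lambda|^{-1/2}$, then reduce to a one-dimensional Van der Corput estimate for the envelope phase $\Psi(\y_1;\omega)$. Your computation that the $\y_1^4$-coefficient of $\Psi(\cdot;0)$ equals $-\a_{112}^2/(4\a_2)+\a_{1111}=-\a_{112}^2/(4\a_2)$ (using the paper's $\a_{ijk}$ normalization and $\a_{1111}=0$) is correct, and since $\pa_{\y_1}^3\Psi$ (respectively $\pa_{\y_1}^4\Psi$) is bounded below near $\y_1=0$ uniformly for small $\omega$, the Van der Corput lemma with $k$-th derivative gives the $|\lambda|^{-1/3}$ (resp.\ $|\lambda|^{-1/4}$) bound uniformly in the linear and lower-order coefficients. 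This is more elementary than the paper's direct invocation of \cite{IM} for (ii) and gives the same conclusion. What you lose relative to the paper is a single unified mechanism that also handles the umbilic case (i); what you gain is a self-contained argument for (ii) that does not require the Newton-polyhedron machinery.
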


\begin{rem}
The results in \cite{V} imply that the above estimates are sharp for $x_1=x_2=0$ and $|x_3|\to \infty$ at least if $f$ is analytic.
\end{rem}

\begin{proof}
We may assume $f(0)=0$. Moreover, changing of the variable $x'_j=x_j+\pa_{\y_j}f(0)x_3$ ($j=1,2$) and $x_3'=x_3$, we may also assume $\pa_{\y_j}f(0)=0$ for $j=1,2$.

$(i)$ We use some notations and definitions from \cite[before Theorem 1.1]{IM}. Let $f_{\mathrm{pr}}$ be the principal part of $f$, $\pi(f)$ be the principal face, $d(f)$ be the Newton distance, $h(f)$ be the height of $f$ and $\n(f)$ be Varchenko's exponent of $f$. By a simple calculation, we have
\begin{align*}
\pi(f)=\{\y\in \re^2\mid \y_1\geq 1,\,\,\y_2\geq 2,\,\, \y_2= -\y_1+3\},\,\, f_{\mathrm{pr}}(\y)=\a_{12}\y_1^2\y_2+\a_{21}\y_1\y_2^2,\,\, d(f)=\frac{3}{2}.
\end{align*}
Moreover, it turns out that the coordinate $\y$ is adapted in the sense of \cite{IM}. In fact, it follows that $\pi(f)$ is the compact edge and $m(f_{\mathrm{pr}})=1<\frac{3}{2}=d(f)$ (this follows from Lemma \ref{elzero}), where $m(f_{\mathrm{pr}})$ is the vanishing order of $f_{\mathrm{pr}}|_{\mathbb{S}^1}$. This implies that $f$ satisfies the condition $(a)$ in \cite[before Lemma 1.5]{IM} and hence the coordinate $\y$ is adapted. This implies $h(f)=d(f)=\frac{3}{2}$. Since $h(f)<2$, we have $\n(f)=0$ by its definition. Now our claim follows from \cite[Theorem 1.1]{IM}.

$(ii)$ First, we assume $\a_{111}\neq 0$. By Lemma \ref{elzero}, we have
\begin{align*}
&\pi(f)=\{\y\in \re^2\mid \y_1\geq 0,\,\,\y_2\geq 0,\,\, \y_2= -\frac{2}{3}\y_1+2\},\quad f_{\mathrm{pr}}(\y)=\a_{2}\y_2^2+\a_{111}\y_1^3,\\
&d(f)=h(f)=\frac{6}{5},\quad m(f_{\mathrm{pr}})=1.
\end{align*}
Since $h(f)<2$, we obtain $\n(f)=0$ and $I(x)=O(|x|^{-\frac{5}{6}})$. Next, we assume $\a_{111}=0$. Since $\a_{1111}=0$, we have
\begin{align*}
&\pi(f)=\{\y\in \re^2\mid \y_1\geq 0,\,\,\y_2\geq 1,\,\, \y_2= -\frac{1}{2}\y_1+2\},\quad f_{\mathrm{pr}}(\y)=\a_{2}\y_2^2+2\a_{112}\y_1^2\y_2,\\
&d(f)=h(f)=\frac{4}{3},\quad m(f_{\mathrm{pr}})=1.
\end{align*}
Since $h(f)<2$, we obtain $\n(f)=0$ and $I(x)=O(|x|^{-\frac{3}{4}})$.
\end{proof}

\begin{rem}
When $\a_{111}=0$ in $(ii)$, the condition $\a_{1111}=0$ is necessary since in general, the principal part is written as $f_{\mathrm{pr}}(\y)=\a_2\y_2^2+2\a_{112}\y_1^2\y_2+\a_{1111}\y_1^4$.  As is pointed out by J.C. Cuenin and I.A. Ikromov, the optimal decay of $I$ is $O(|x|^{-\frac{1}{2}})$ for  the phase function $f(\y)=(\y_2-\y_1^2)^2=\y_2^2-2\y_1^2\y_2+\y_2^4$. 
\end{rem}

\section{Geometry of hypersurfaces}

In this section, we study the geometry of the Fermi surface $M_{\l}$ for $\l\in [0,4d]$. 
\subsection{General theory}
Let $M\subset \T^3$ or $M\subset \re^3$ be an embedded hypersurface of codimension $1$. Let $q\in M$. We may assume that there exist an open neighborhood $U\subset \T^3$ or $U\subset \re^3$ of $q$, an open set $V\subset \re^{2}$ and a smooth function $f:V\to \re$ such that $M\cap U=\{(\x', f(\x'))\mid \x'\in V\}$. 
We compute the induced Riemannian metric $g$ on $M$, the unit normal $\n$, the second fundamental form $A(\x')=(A(\x'))_{i,j=1}^{2}$ and the Gaussian curvature $K_1(\x')$:
\begin{align}
g&=\sum_{j=1}^{2}(1+\pa_{\x_j}f(\x')^2)d\x_j^2+2\pa_{\x_1}f(\x')\pa_{\x_2}f(\x')d\x_1d\x_2,\nonumber\\
\n(\x')&=\frac{1}{\sqrt{1+|\nabla_{\x'}f(\x')|^2}}
\begin{pmatrix}
-\nabla_{\x'}f(\x')\\
1
\end{pmatrix},\,\,A_{ij}(\x')=\frac{\pa_{\x_i}\pa_{\x_j}f(\x')}{\sqrt{1+|\nabla_{\x'}f(\x')|^2}},\label{nuab}\\
K_1(\x')&=\frac{\det (A_{ij}(\x'))}{\det g(\x')}=\frac{\det\pa_{\x_i}\pa_{\x_j}f(\x')}{(1+|\nabla_{\x'}f(\x')|^2)^2}.\label{Kab}
\end{align}

\begin{lem}\label{nonvaneq}
We denote the Gaussian curvature  at $\x\in M\cap U\subset \T^3$ by $K(\x)$, that is $K_1(\x')=K(\x',f(\x'))$ for $\x=(\x', f(\x'))\in M\cap U$. Then it follows that $\nabla_{\x'} K_1(\x')\neq 0$ if and only if 
\begin{align*}
(\nabla_{\x} h_0\times \nabla_{\x}K)(\x', f(\x'))\neq 0.
\end{align*}
\end{lem}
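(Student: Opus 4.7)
The plan is to reduce the equivalence to a chain-rule computation, using the graph representation of $M$ and the implicit function theorem applied to $h_0(\x',f(\x')) = \l$. Since $K_1(\x') = K(\x', f(\x'))$, the chain rule gives
\begin{align*}
\pa_{\x_j} K_1(\x') = \pa_{\x_j} K(\x',f(\x')) + \pa_{\x_3} K(\x',f(\x')) \cdot \pa_{\x_j} f(\x'), \quad j = 1,2.
\end{align*}
Because the graph representation is available, one has $\pa_{\x_3} h_0 \neq 0$ on $U$, and differentiating $h_0(\x', f(\x')) = \l$ yields $\pa_{\x_j} f = -\pa_{\x_j} h_0 / \pa_{\x_3} h_0$. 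Substituting,
\begin{align*}
\pa_{\x_3} h_0 \cdot \pa_{\x_j} K_1 = \pa_{\x_j} K \cdot \pa_{\x_3} h_0 - \pa_{\x_3} K \cdot \pa_{\x_j} h_0, \quad j = 1,2,
\end{align*}
and the right-hand sides are, up to sign, the second and first components of $\nabla_\x h_0 \times \nabla_\x K$ evaluated at $(\x', f(\x'))$. Hence $\nabla_{\x'} K_1(\x') = 0$ is equivalent to the vanishing of those first two components of $\nabla_\x h_0 \times \nabla_\x K$.

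To upgrade this to the full equivalence with $\nabla_\x h_0 \times \nabla_\x K = 0$, I would observe that this cross product is orthogonal to $\nabla_\x h_0$ and hence lies in the tangent plane to $M$ at $(\x', f(\x'))$. That tangent plane is spanned by $(1, 0, \pa_{\x_1} f)$ and $(0, 1, \pa_{\x_2} f)$, so projection onto the first two coordinates restricts to a linear isomorphism on it. Consequently, a tangent vector vanishes if and only if its first two components do, which closes the equivalence in both directions.

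I do not anticipate any real obstacle: the lemma is essentially a packaging of the chain rule and the tangent-plane observation. The only mild point requiring attention is the third component of the cross product, which would be handled either directly (by computing it using $\pa_{\x_j} h_0 = -\pa_{\x_j} f \cdot \pa_{\x_3} h_0$ to see it vanishes automatically under the first two) or, more cleanly, by the tangent-plane argument above.
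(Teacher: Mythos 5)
Your proof is correct and follows essentially the same route as the paper: both compute the cross product in terms of $\nabla_{\x'} K_1$ and $\nabla_{\x'} f$, identify its first two components (after dividing out $\pa_{\x_3} h_0 \neq 0$) with $\pm\pa_{\x_j} K_1$, and then dispose of the third component. The paper handles the third component by a direct algebraic substitution showing it vanishes when $\nabla_{\x'} K_1 = 0$, whereas your tangent-plane observation (the cross product lies in $T_\x M$, on which projection to the first two coordinates is injective) gives the same conclusion a bit more conceptually; both are valid.
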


\begin{proof}
We recall $\nabla h_0$ is the unit normal of $M_{\l}$ and is parallel to the vector 
\begin{align*}
(-\pa_{\x_1}f, -\pa_{\x_2}f, 1).
\end{align*}
We learn
\begin{align*}
\nabla_{\x'} K_1(\x')=(\nabla_{\x'}K)(\x', f(\x'))+ (\pa_{\x_3}K)(\x', f(\x'))\nabla_{\x'}f(\x')
\end{align*}
and
\begin{align*}
\begin{pmatrix}
-\pa_{\x_1}f\\
-\pa_{\x_2}f\\
1
\end{pmatrix}\times \nabla_{\x} K(\x', f(\x'))=
\begin{pmatrix}
-\pa_{\x_3}K \pa_{\x_2}f-\pa_{\x_2}K\\
\pa_{\x_3}K\pa_{\x_1}f+\pa_{\x_1}K\\
-\pa_{\x_1}f\pa_{\x_2}K+\pa_{\x_2}f\pa_{\x_1}K
\end{pmatrix}.
\end{align*}
It follows from this calculation that $(\nabla_{\x} h_0\times \nabla_{\x}K)(\x', f(\x'))= 0$ implies $\nabla_{\x'} K_1(\x')= 0$.
A simple calculation implies that $\nabla_{\x'}K_1(\x')=0$ gives $-\pa_{\x_1}f\pa_{\x_2}K+\pa_{\x_2}f\pa_{\x_1}K=0$ at $\x=(\x',f(\x'))$. This completes the proof.
\end{proof}

It is useful to calculate the Taylor expansion of $f$ in terms of information about the Hessian of $f$:
\begin{lem}\label{matrixTaylor}
Let $V\subset \re^2$ be an open set and $f\in C^{\infty}(V;\re)$. Moreover, the $2\times 2$-matrix $B(\x')$ is defined by $B(\x')=(\pa_{\x_i}\pa_{\x_j}f(\x'))_{j,k=1}^2$. Suppose that there exist smooth functions $\l_{\pm}(\x')\in C^{\infty}(V; \re)$ and a orthogonal matrix 
\begin{align*}
U(\x')=\begin{pmatrix}
u_+(\x')&u_-(\x')
\end{pmatrix},\,\, u_{\pm}(\x')\in C^{\infty}(V; \re^2)\,\, \text{with}\,\, u_a(\x')\cdot u_{b}(\x')=\d_{ab}\,\, a,b\in \{\pm\}
\end{align*}
such that
\begin{align*}
U(\x')^{-1}B(\x')U(\x')=\begin{pmatrix}
\l_+(\x')&0\\
0&\l_-(\x')
\end{pmatrix}.
\end{align*}
Let $p\in V$. Set $U=U(p)$ and introduce the variable
\begin{align*}
\y=U^{-1}(\x'-p).
\end{align*}
Then we have
\begin{align*}
f(\x')=&f(p)+\pa_{\x'}f(p)\cdot U\y+\frac{1}{2}(\l_+(p)\y_1^2+\l_-(p)\y_2^2)\\
&+\frac{1}{3!}(u_+(p)\cdot(\nabla_{\x'}\l_+)(p)\y_1^3+u_-(p)\cdot(\nabla_{\x'}\l_-)(p)\y_2^3  )\\
&+\frac{1}{3!}(3u_-(p)\cdot(\nabla_{\x'}\l_+)(p)\y_1^2\y_2+3u_+(p)\cdot(\nabla_{\x'}\l_-)(p)\y_1\y_2^2  )\\
&+O(|\y|^4)
\end{align*}
as $\y\to 0$.

\end{lem}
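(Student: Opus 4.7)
\emph{Proof plan.} The plan is to change coordinates via $\y = U(p)^T(\x'-p)$, define the pullback $\tilde f(\y) := f(p + U(p)\y)$, and read off its Taylor coefficients at $\y = 0$. Since $U(p)$ is orthogonal with columns $u_+(p), u_-(p)$, the chain rule gives $\pa_{\y_\a}\tilde f(\y) = \bigl(u_\a(p)\cdot\nabla_{\x'}\bigr)f(p + U(p)\y)$ under the convention $u_1 = u_+$, $u_2 = u_-$. The constant and linear terms come out immediately as $f(p) + \nabla_{\x'}f(p)\cdot U(p)\y$. For the quadratic term one computes $\pa_{\y_\a}\pa_{\y_\b}\tilde f(0) = (U(p)^T B(p) U(p))_{\a\b} = \mathrm{diag}(\l_+(p),\l_-(p))_{\a\b}$, yielding $\tfrac{1}{2}(\l_+(p)\y_1^2 + \l_-(p)\y_2^2)$.

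The cubic part is the heart of the argument. Denote $T_{\a\b\c} := \pa_{\y_\a}\pa_{\y_\b}\pa_{\y_\c}\tilde f(0)$, which is a totally symmetric tensor, and observe that
$$T_{\a\b\c} = \bigl[U(p)^T \bigl(u_\a(p)\cdot\nabla_{\x'}\bigr) B(p)\, U(p)\bigr]_{\b\c}.$$
I would then substitute the global factorization $B(\x') = U(\x') D(\x') U(\x')^T$ with $D(\x') = \mathrm{diag}(\l_+(\x'),\l_-(\x'))$ and differentiate. Since $U(\x')^T U(\x') = I$, the matrix $N_\a := U(p)^T \bigl(u_\a(p)\cdot\nabla_{\x'}\bigr) U(p)$ is skew-symmetric, and a direct product-rule computation gives
$$U(p)^T \bigl(u_\a(p)\cdot\nabla_{\x'}\bigr) B(p)\, U(p) = [N_\a, D(p)] + \mathrm{diag}\bigl(u_\a(p)\cdot\nabla\l_+(p),\, u_\a(p)\cdot\nabla\l_-(p)\bigr).$$
For any $2\times 2$ skew $N_\a$ and diagonal $D(p)$, the commutator $[N_\a, D(p)]$ has vanishing diagonal entries, so the diagonal of the left-hand side is simply $\bigl(u_\a(p)\cdot\nabla\l_+(p),\, u_\a(p)\cdot\nabla\l_-(p)\bigr)$.

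Finally I would exploit the total symmetry of $T_{\a\b\c}$: the mixed coefficients satisfy $T_{112} = T_{211}$ and $T_{122} = T_{221}$, each of which can therefore be read off as a \emph{diagonal} entry of the matrix above with an appropriate choice of direction $u_\a(p)$. This yields
$$T_{111} = u_+(p)\cdot\nabla\l_+(p),\qquad T_{222} = u_-(p)\cdot\nabla\l_-(p),$$
$$T_{112} = u_-(p)\cdot\nabla\l_+(p),\qquad T_{122} = u_+(p)\cdot\nabla\l_-(p).$$
Inserting these into $\frac{1}{3!}\bigl(T_{111}\y_1^3 + 3T_{112}\y_1^2\y_2 + 3T_{122}\y_1\y_2^2 + T_{222}\y_2^3\bigr)$ reproduces the claimed cubic coefficients, and Taylor's theorem controls the remainder by $O(|\y|^4)$. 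The one point worth emphasizing is that the total symmetry of the third-derivative tensor bypasses the need to compute the off-diagonal entry of $[N_\a, D(p)]$ explicitly, which would otherwise involve the connection form $N_\a$ and the eigenvalue gap $\l_- - \l_+$; that shortcut is essentially the only subtle step.
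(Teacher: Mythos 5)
Your proof is correct and follows essentially the same approach as the paper: change variables via $U(p)$, differentiate the diagonalization $B(\x')=U(\x')\mathrm{diag}(\l_+,\l_-)U(\x')^{T}$ at $p$, note that the skew connection matrix contributes only to the off-diagonal of the resulting commutator, and finally extract the mixed cubic coefficients via symmetry of third partials. The paper phrases the final step as applying $\pa_{\y_1}=u_+(p)\cdot\pa_{\x'}$, $\pa_{\y_2}=u_-(p)\cdot\pa_{\x'}$ to the identity $\nabla_{\x'}\l_\pm(p)=\pa_{\x'}\pa_{\y_j}^2 f(p)$, which is precisely your symmetry-of-$T_{\a\b\c}$ shortcut.
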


\begin{proof}
We note
\begin{align*}
(\x'-p)\cdot \pa_{\x'}^2f(p)(\x'-p)=\y \cdot {}^tU B(p) U\y=\l_+(p)^2\y_1^2+\l_-(p)^2\y_2^2.
\end{align*}
Thus, it suffices to prove
\begin{align*}
\pa_{\y_1}^3f(p)= u_+(p)\cdot(\nabla_{\x'}\l_+)(p),\,\, \pa_{\y_1}^2\pa_{\y_2}f(p)=u_-(p)\cdot(\nabla_{\x'}\l_+)(p),\\
\end{align*}
\vspace{-1.1cm}
\begin{align*}
\pa_{\y_1}\pa_{\y_2}^2f(p)=u_+(p)\cdot(\nabla_{\x'}\l_-)(p),\,\,\pa_{\y_2}^3f(p)=u_-(p)\cdot(\nabla_{\x'}\l_-)(p).
\end{align*}
To see this, we observe
\begin{align*}
U(p)^{-1}\pa_{\x'}^2f(\x')U(p)=\pa_{\y}^2f(\x').
\end{align*}
This implies
\begin{align*}
\begin{pmatrix}
\l_{+}(\x')  &0\\
0&\l_-(\x')
\end{pmatrix}=U(\x')^{-1}U(p)\pa_{\y}^2f(\x')U(p)^{-1}U(\x')
\end{align*}
Differentiating in $\x'$ and substituting $\x'=p$, we have
\begin{align}
\begin{pmatrix}
\pa_{\x'}\l_{+}(p)  &0\\
0&\pa_{\x'}\l_-(p)
\end{pmatrix}=&\pa_{\x'}\pa_{\y}^2f(\x')+U(p)^{-1}U(p)\pa_{\y}^2f(p)U(p)^{-1}(\pa_{\x'}U)(p)\nonumber\\
&-U(p)^{-1}(\pa_{\x'}U)(p)U(p)^{-1}U(p)  \pa_{\y}^2f(p)U(p)^{-1}U(p)\nonumber\\
=&\pa_{\x'}\pa_{\y}^2f(\x')+\pa_{\y}^2f(p)U(p)^{-1}(\pa_{\x'}U)(p)\nonumber\\
&-U(p)^{-1}(\pa_{\x'}U)(p) \pa_{\y}^2f(p).\label{matrixcal1}
\end{align}
Using $(\pa_{\x'}|u_{\pm}(\x')|^2)|_{\x'=p}=\pa_{\x'}1=0$ and $(\pa_{\x'}u_{+}(\x')\cdot u_-(\x'))|_{\x'=p}=\pa_{\x'}0=0$, we have
\begin{align*}
u_+(p)\cdot \pa_{\x'} u_+(p)=u_-(p)\cdot \pa_{\x'} u_-(p)=0,\quad u_+(p)\cdot \pa_{\x'} u_-(p)+\pa_{\x'} u_+(p)\cdot u_-(p)=0.
\end{align*}
This implies
\begin{align*}
U(p)^{-1}(\pa_{\x'}U)(p)=&\begin{pmatrix}
u_+(p)\cdot \pa_{\x'} u_+(p)&u_+(p)\cdot \pa_{\x'} u_-(p)\\
u_-(p)\cdot \pa_{\x'} u_+(p)& u_-(p)\cdot \pa_{\x'} u_-(p)
\end{pmatrix}\\
=&
\begin{pmatrix}
0&u_+(p)\cdot \pa_{\x'} u_-(p)\\
u_-(p)\cdot \pa_{\x'} u_+(p)&0
\end{pmatrix}.
\end{align*}
Setting $a=u_+(p)\cdot \pa_{\x'} u_-(p)=-u_-(p)\cdot \pa_{\x'} u_+(p)$, $A=\pa_{\y_1}^2f(p)$ and $B=\pa_{\y_2}^2f(p)$, we have
\begin{align}
\pa_{\y}^2&f(p)U(p)^{-1}(\pa_{\x'}U)(p)-U(p)^{-1}(\pa_{\x'}U)(p) \pa_{\y}^2f(p)\nonumber\\
=&\begin{pmatrix}
A&0\\
0&B
\end{pmatrix}\begin{pmatrix}
0&a\\
-a&0
\end{pmatrix}-\begin{pmatrix}
0&a\\
-a&0
\end{pmatrix}\begin{pmatrix}
A&0\\
0&B
\end{pmatrix}=\begin{pmatrix}
0&a(A-B)\\
a(A-B)&0
\end{pmatrix}.\label{matrixcal2}
\end{align}
It follows from $(\ref{matrixcal1})$ and $(\ref{matrixcal2})$ that
\begin{align*}
\pa_{\x'}\l_{+}(p) =(\pa_{\x'}\pa_{\y_1}^2)f(p),\,\, \pa_{\x'}\l_{-}(p) =(\pa_{\x'}\pa_{\y_2}^2)f(p)
\end{align*}
Using $\pa_{\y_1}=u_+(p)\cdot \pa_{\x'}$ and $\pa_{\y_2}=u_-(p)\cdot \pa_{\x'}$, we complete the proof.

\end{proof}

\subsection{Geometry of the Fermi surface}

In the following, we consider the Fermi surface $M=M_{\l}=h_0^{-1}(\{\l\})$.
We fix some notations.
For $j=1,2,3$, we set
\begin{align*}
a_j=a_j(\x)=\cos2\pi \x_j,\,\, b_j=b_j(\x)=\sin 2\pi \x_j.
\end{align*}
Set
\begin{align*}
E_{\l}=3-\l/2\in (-3,3)\quad \text{for}\quad 0<\l<12.
\end{align*}
From the expression $(\ref{multh_0})$, we have
\begin{align}\label{Fercal}
M_{\l}=\{\x\in \T^3\mid a_1+a_2+a_3=E_{\l}\},\,\mathrm{Cr}(h_0)=\{\x\in \T^3\mid b_j=0,\,\, j=1,2,3\},
\end{align}
where we recall that $\mathrm{Cr}(h_0)$ is defined in $(\ref{crih_0})$.
We define 
\begin{align*}
&K(\x)\in C^{\infty}(\T^3\setminus \mathrm{Cr}(h_0);\re):\, \text{the Gaussian curvature of $M_{\l}$ at $\x$},\\
&\n(\x)\in C^{\infty}(\T^3\setminus \mathrm{Cr}(h_0); \re^3):\, \text{the unit normal of $M_{\l}$ at $\x$}, 
\end{align*}
where the smoothness of $K$ follows from the implicit function theorem. For $\x\in M_{\l}\cap \{\pa_{\x_3}h_0(\x)\neq 0\}=\{b_3\neq 0\}$,
we write
\begin{align*}
\x=(\x',f_{\l}(\x')),\quad K(\x',f_{\l}(\x'))=K_1(\x').
\end{align*}
We note that the map $(\x',\l)\mapsto f_{\l}(\x')$ is smooth by virtue of the implicit function theorem.

We can calculate $K$ and $\n$ explicitly:
\begin{lem}
We have
\begin{align}
K(\x)=&\frac{4\pi^2(a_1a_2b_3^2+a_2a_3b_1^2+a_3a_1b_2^2)}{(b_1^2+b_2^2+b_3^2)^2},\,\, \n(\x)=\frac{1}{\sqrt{b_1^2+b_2^2+b_3^2}}(b_1,b_2,b_3)  \label{Kcal}.
\end{align}
\end{lem}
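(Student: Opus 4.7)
The plan is to exploit the fact that $M_\l$ is a level set of $h_0$, so that the normal vector comes almost for free from the gradient, and then to obtain the Gaussian curvature by the explicit parametrization formula \eqref{Kab} after solving $h_0(\x)=\l$ locally for one coordinate via implicit differentiation.

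For the unit normal, I would simply compute $\pa_{\x_j}h_0(\x)=4\pi\sin(2\pi\x_j)=4\pi b_j$, so $\nabla h_0(\x)=4\pi(b_1,b_2,b_3)$. Since $\nabla h_0$ is normal to the level set $M_\l$ away from $\mathrm{Cr}(h_0)$, normalizing gives $\n(\x)=(b_1,b_2,b_3)/\sqrt{b_1^2+b_2^2+b_3^2}$, which is \eqref{Kcal} (up to the standard orientation choice). Note that $\x\in M_\l\setminus\mathrm{Cr}(h_0)$ ensures $b_1^2+b_2^2+b_3^2\neq 0$, so the expression is well defined.

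For the Gaussian curvature, by the symmetry of the target formula in the indices $(1,2,3)$, I may assume $b_3(\x)\neq 0$ without loss of generality. Then the implicit function theorem gives $\x_3=f_\l(\x_1,\x_2)$ locally on $M_\l$, and I will use formula \eqref{Kab}. Differentiating $a_1+a_2+a_3(\x_1,\x_2,f_\l)=E_\l$ once in $\x_1$ and $\x_2$ yields
\begin{align*}
\pa_{\x_1}f_\l=-\frac{b_1}{b_3},\qquad \pa_{\x_2}f_\l=-\frac{b_2}{b_3},
\end{align*}
which immediately gives $1+|\nabla_{\x'}f_\l|^2=(b_1^2+b_2^2+b_3^2)/b_3^2$. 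Differentiating once more (and using $\pa_{\x_j}a_j=-2\pi b_j$, $\pa_{\x_j}b_j=2\pi a_j$) produces
\begin{align*}
\pa_{\x_1}^2 f_\l=-\tfrac{2\pi(a_1 b_3^2+a_3 b_1^2)}{b_3^3},\quad \pa_{\x_2}^2 f_\l=-\tfrac{2\pi(a_2 b_3^2+a_3 b_2^2)}{b_3^3},\quad \pa_{\x_1}\pa_{\x_2} f_\l=-\tfrac{2\pi a_3 b_1 b_2}{b_3^3}.
\end{align*}

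The remaining step is a bookkeeping calculation: the determinant of the Hessian of $f_\l$ is
\begin{align*}
\det\pa_{\x_i}\pa_{\x_j}f_\l=\frac{4\pi^2}{b_3^6}\bigl[(a_1 b_3^2+a_3 b_1^2)(a_2 b_3^2+a_3 b_2^2)-a_3^2 b_1^2 b_2^2\bigr]=\frac{4\pi^2(a_1a_2 b_3^2+a_2a_3 b_1^2+a_3a_1 b_2^2)}{b_3^4},
\end{align*}
the $a_3^2 b_1^2 b_2^2$ terms cancelling. Dividing by $(1+|\nabla_{\x'}f_\l|^2)^2=(b_1^2+b_2^2+b_3^2)^2/b_3^4$ gives $K_1$ exactly as in \eqref{Kcal}. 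I do not expect a real obstacle here: the only subtlety is the convenient cancellation of the cross term in the determinant, which is what produces the clean symmetric expression; everything else is direct implicit differentiation of the defining relation.
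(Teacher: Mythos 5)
Your proposal is correct and follows essentially the same route as the paper: implicit differentiation of the defining relation $a_1+a_2+a_3=E_\l$ twice to get $\pa_{\x'}f_\l$ and $\pa_{\x'}^2 f_\l$, then substitution into \eqref{Kab}; your intermediate expressions and the cancellation in the determinant match the paper's computation exactly. The only (cosmetic) difference is that you obtain $\n$ directly as $\nabla h_0/|\nabla h_0|$, while the paper substitutes $\pa_{\x'}f_\l$ into the graph formula \eqref{nuab} — these agree up to the orientation sign.
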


\begin{proof}
Let $\x_0\in \T^3\setminus \mathrm{Cr}(h_0)$ and $U$ be a small neighborhood of $\x_0$. We prove $(\ref{Kcal})$ at $\x_0$. Set $\l=h_0(\x_0)$.  By permutating the coordinate, we may assume $b_3(\x)\neq 0$. By the implicit function theorem, $U\cap M_{\l}$ has a graph representation: 
\begin{align*}
U\cap M_{\l}=\{(\x', f_{\l}(\x'))\}.
\end{align*}
Differentiating $h_0(\x',f_{\l}(\x'))=\l$ twice, for $j=1,2$, we have
\begin{align}\label{fdif1}
\pa_{\x_j}f_{\l}(\x')=-\frac{b_j}{b_3},   \pa_{\x_j}^2f_{\l}(\x')=-\frac{2\pi}{b_3^3}(a_jb_3^2+a_3b_j^2),\pa_{\x_1}\pa_{\x_2}f_{\l}(\x')=-\frac{2\pi b_1b_2a_3}{b_3^3}.
\end{align}
This implies 
\begin{align*}
(1+|\pa_{\x'}f(\x')|^2)^2=&\frac{(b_1^2+b_2^2+b_3^2)^2}{b_3^4}, \\
\det \pa_{\x'}^2f_{\l}(\x'):=&\pa_{\x_1}^2f(\x')\pa_{\x_2}^2f(\x')-(\pa_{\x_1}\pa_{\x_2}f(\x'))^2\\
=&\frac{4\pi^2}{b_3^4}(a_1a_2b_3^2+a_2a_3b_1^2+a_3a_1b_2^2).
\end{align*}
Substituting these relations into $(\ref{nuab})$ and $(\ref{Kab})$, we obtain $(\ref{Kcal})$.
\end{proof}

Now we determine all points where the Gaussian curvature vanishes.

\begin{prop}\label{Gauvanish}
Let $0<\l<12$. 

\noindent$(i)$
We have
\begin{align}
K^{-1}(0)\cap M_{\l}=&\{a_1a_2+a_2a_3+a_3a_1=a_1a_2a_3(a_1+a_2+a_3)\}\label{vani} \\
=&\{a_1=a_2=0\}\cup \{a_2=a_3=0\}\cup\{a_3=a_1=0\}\nonumber \\
&\cup\{a_1+a_2+a_3=1/a_1+1/a_2+1/a_3,\,\, a_1, a_2, a_3\neq 0\}.\nonumber
\end{align}
Moreover, if $K(\x)= 0$ with $\x\in M_{\l}\setminus \mathrm{Cr}(h_0)$, then $4\leq \l\leq 8$ holds.

\noindent $(ii)$ All principal curvatures of $M_{\l}\setminus \mathrm{Cr}(h_0)$ at $\x$ vanish  if and only if $\x_j\in \{1/4,3/4\}$ for $j=1,2,3$ and $\l=6$.

\noindent$(iii)$ The Gaussian curvature $K(\x)$ on $M_{6}$ vanishes if and only if $\x_j\in \{1/4,3/4\}$ for $j=1,2,3$.
\end{prop}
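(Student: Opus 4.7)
For part (i), I would substitute $b_j^2 = 1 - a_j^2$ into the numerator of $K$ from the preceding lemma, turning $K(\xi) = 0$ into
\begin{align*}
a_1 a_2 + a_2 a_3 + a_3 a_1 = a_1 a_2 a_3 (a_1 + a_2 + a_3),
\end{align*}
which is the first equality in the statement. For the second equality I would case-split on whether some $a_j$ vanishes: if $a_1 = 0$, the identity collapses to $a_2 a_3 = 0$, yielding the three coordinate-plane pieces, while if all $a_j \neq 0$ I divide by $a_1 a_2 a_3$ to recover the reciprocal-sum form.

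For the \emph{moreover} claim, the task reduces to showing $|s| \leq 1$ where $s := a_1 + a_2 + a_3 = E_\lambda$. In the coordinate-plane pieces this is immediate from $|a_j| \leq 1$. For the remaining piece I would introduce $p := a_1 a_2 + a_2 a_3 + a_3 a_1$ and $q := a_1 a_2 a_3$, so the defining relation reads $p = sq$. The key observation is the pair of factorizations
\begin{align*}
\prod_{j=1}^{3}(1 - a_j) = 1 - s + p - q = (1-s)(1-q), \quad \prod_{j=1}^{3}(1 + a_j) = (1+s)(1+q),
\end{align*}
which hold precisely because $p = sq$. Both left-hand sides are nonnegative since $|a_j| \leq 1$; the hypothesis $\xi \notin \mathrm{Cr}(h_0)$ forces some $b_j \neq 0$, hence $|q| = |a_1 a_2 a_3| < 1$, so dividing yields $1 \pm s \geq 0$, i.e.\ $|s| \leq 1$. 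Spotting this symmetric-function manipulation is, in my view, the one real obstacle in the proposition; the rest is essentially mechanical.

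For (ii), after permuting coordinates so that $b_3 \neq 0$ at the point under consideration, $M_\lambda$ is locally the graph of $f_\lambda$, and both principal curvatures vanish iff the Hessian of $f_\lambda$ vanishes. By the explicit second-derivative formulas for $f_\lambda$, this amounts to $a_j b_3^2 + a_3 b_j^2 = 0$ for $j = 1, 2$ together with $a_3 b_1 b_2 = 0$. If $b_1 = 0$ then $a_1 = \pm 1$, so the $j=1$ equation becomes $\pm b_3^2 = 0$, contradicting $b_3 \neq 0$; similarly $b_2 \neq 0$. Hence $a_3 = 0$, and the first two equations then give $a_1 = a_2 = 0$. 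Thus $\xi_j \in \{1/4, 3/4\}$ for all $j$ and $E_\lambda = 0$, i.e.\ $\lambda = 6$; the converse follows by direct substitution into the second-derivative formulas.

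For (iii), on $M_6$ the constraint $a_1 + a_2 + a_3 = 0$ annihilates the right-hand side of the first equality in (i), so $K(\xi) = 0$ reduces to $a_1 a_2 + a_2 a_3 + a_3 a_1 = 0$. Squaring the constraint then gives
\begin{align*}
a_1^2 + a_2^2 + a_3^2 = -2(a_1 a_2 + a_2 a_3 + a_3 a_1) = 0,
\end{align*}
forcing $a_1 = a_2 = a_3 = 0$, equivalently $\xi_j \in \{1/4, 3/4\}$ for $j = 1, 2, 3$; the converse is obvious from the explicit formula for $K$.
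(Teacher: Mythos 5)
Your proof is correct and follows the same overall approach as the paper's (explicit formulas for the Gaussian curvature and for the Hessian of the graph function $f_\lambda$), but several steps are handled more cleanly. For the ``moreover'' in (i), the paper reaches the factorizations $\prod_j(1\mp a_j)=(1\mp E_\lambda)(1\mp a_1a_2a_3)$ by observing that $a_1,a_2,a_3$ are the roots of the cubic $t^3-E_\lambda t^2+E_\lambda a_1a_2a_3\,t-a_1a_2a_3$ and evaluating at $t=\pm1$; your symmetric-function manipulation produces the same two inequalities. However, you explicitly record that $\xi\notin\mathrm{Cr}(h_0)$ forces $|a_1a_2a_3|<1$ strictly, which is exactly what permits dividing to conclude $|E_\lambda|\le1$; the paper only invokes $|a_1a_2a_3|\le1$, which does not by itself close the argument (if $a_1a_2a_3=1$, say, the first inequality is vacuous and gives no upper bound on $E_\lambda$), so your version patches a small gap. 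For (ii) your route to $a_3=0$ --- showing $b_1,b_2\ne0$ directly from $a_jb_3^2+a_3b_j^2=0$ together with $b_3\ne0$, and then reading $a_3=0$ off $a_3b_1b_2=0$ --- is shorter than the paper's case-split on whether $a_3$ vanishes. For (iii) your identity $a_1^2+a_2^2+a_3^2=(a_1+a_2+a_3)^2-2(a_1a_2+a_2a_3+a_3a_1)=0$ is more direct than the paper's argument via the roots of $t^3-a_1a_2a_3=0$, which requires a separate case analysis on whether $a_1a_2a_3$ vanishes.
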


\begin{proof}
$(i)$
The first part of $(i)$ immediately follows from the representation $(\ref{Kcal})$ and the relations $a_j^2+b_j^2=1$ for $j=1,2,3$.

Next, we prove that $K(\x)= 0$ with $\x\in M_{\l}$ implies $4\leq \l\leq 8$. Let $\x\in K^{-1}(0)\cap M_{\l}$ and set
\begin{align*}
f(t)=t^3-E_{\l}t^2+(E_{\l}a_1a_2a_3)t-a_1a_2a_3.
\end{align*}
Then $(\ref{Fercal})$ and $(\ref{vani})$ imply that $a_1$, $a_2$ and $a_3$ are all zeros of $f$. Since $\lim_{t\to \pm\infty}f(t)=\pm \infty$ and $a_j\in [-1,1]$, we have $f(1)\geq 0$ and $f(-1)\leq 0$, which implies
\begin{align*}
(1-E_{\l})(1-a_1a_2a_3)\geq 0,\,\, (1+E_{\l})(1+a_1a_2a_3)\leq 0.
\end{align*}
These inequalities with $|a_1a_2a_3|\leq 1$ gives $-1\leq E_{\l}\leq 1$, which is equivalent to $4\leq \l\leq 8$.

$(ii)$ Next, we prove the part $(ii)$. Let $\x\in M_{\l}\setminus \mathrm{Cr}(h_0)$ with $4\leq \l\leq 8$. By permutating the coordinate, we may assume that $b_3(\x)\neq 0$ and that we can write $U\cap M_{\l}=\{(\x',f_{\l}(\x'))\}$. Then all principal curvatures of $M_{\l}$ at $\x$ vanishes if and only if $\pa_{\x_k}\pa_{\x_l}f(\x')=0$ for each $k,l=1,2$. This is also equivalent to 
\begin{align}\label{vanish}
\d_{kl}(1-a_3^2)a_k+b_kb_l\sqrt{1-a_k^2}\sqrt{1-a_l^2}a_3=0,\quad k,l=1,2.
\end{align}
Since it is easy to see that $\x_j\in\{1/4,3/4\}$ for $j=1,2,3$ imply $(\ref{vanish})$, then we prove that $(\ref{vanish})$ implies $\x_j\in\{1/4,3/4\}$ for $j=1,2,3$. Recall that $|a_3|\neq 1$ since we assume $\pa_{\x_3}h_0\neq 0$ on $M_{\l}$. 

If we suppose $a_3=0$, then $(\ref{vanish})$ with $k=l=1,2$ imply that $a_k=0$ for $k=1,2$ and hence $\x_k\in \{1/4,3/4\}$ for $k=1,2,3$. 

If we suppose $a_3\neq 0$, then $(\ref{vanish})$ with $k=1$ and $l=2$ imply that either $|a_1|$ or $|a_2|$ is equal to $1$. Then it follows from $a_3\neq 0$ and from $(\ref{vanish})$ with $k=l=1$ or $k=l=2$ that $|a_3|=1$. Thus we obtain $|a_k|=1$ for $k=1,2$ by $(\ref{vanish})$ with $k=l=1,2$. However, this contradicts to $\nabla h_0(\x)\neq0$ and hence $a_3=0$.

$(iii)$ Finally, we prove the part $(iii)$. We note that $\l=6$ is equivalent to $E_{\l}=0$. $(\ref{Fercal})$ and $(\ref{vani})$ implies
\begin{align}\label{6eq}
a_1+a_2+a_3=a_1a_2+a_2a_3+a_3a_1=0
\end{align}
at $\x\in M_{6}$. Then, it follows that $a_1,a_2, a_3$ are the solutions to the equation
\begin{align}\label{aleq}
t^3-a_1a_2a_3=0.
\end{align}
If $a_1a_2a_3=0$, then we have $a_1=a_2=a_3=0$ and hence $\x_j\in \{1/4,3/4\}$ holds for $j=1,2,3$. We suppose $a_1a_2a_3\neq 0$ and deduce a contradiction. Substituting $(\ref{aleq})$ into $t=a_1$, $a_2$ and $a_3$, we have $a_1^3=a_2^3=a_3^3=a_1a_2a_3$. This gives $a_1^2=a_2a_3$, $a_2^2=a_3a_1$ and $a_3^2=a_1a_2$. Combining these relations with $(\ref{aleq})$, we obtain $a_1=a_2=a_3$. Thus $(\ref{6eq})$ implies $a_1=a_2=a_3=0$. This is a contradiction.

\end{proof}

\begin{lem}\label{nonvanih0}
Let $4<\l<8$ with $\l\neq 6$ and $\x_*\in M_{\l}\cap K^{-1}(0)$. Then we obtain
\begin{align}\label{vecneq}
\nabla_{\x} h_0(\x_*)\times \nabla_{\x}K(\x_*)\neq 0.
\end{align}
In particular, from Lemma \ref{nonvaneq}, we have $(\nabla_{\x'}K_1)(\x_{*}')\neq 0$, where $\x_*=(\x_*',f_{\l}(\x'))$.
\end{lem}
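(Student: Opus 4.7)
The plan is to argue by contradiction. Suppose $(\nabla_{\x} h_0 \times \nabla_{\x} K)(\x_*) = 0$. Using the explicit formulas $(\ref{Kcal})$, namely $\nabla h_0 = 4\pi(b_1,b_2,b_3)$ and $K = 4\pi^2 N/D^2$ with
\[
N = a_1 a_2 b_3^2 + a_2 a_3 b_1^2 + a_3 a_1 b_2^2, \qquad D = b_1^2 + b_2^2 + b_3^2,
\]
the assumption $K(\x_*) = 0$ is equivalent to $N(\x_*) = 0$, which makes the $\nabla(D^{-2})$ contribution drop out and yields $\nabla K(\x_*) = (4\pi^2/D(\x_*)^2)\,\nabla N(\x_*)$. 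So the problem reduces to showing that $\nabla N(\x_*)$ is not parallel to $(b_1,b_2,b_3)(\x_*)$.

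Next, I would compute $\nabla N$ directly using $\pa_{\x_j} a_k = -2\pi b_k \d_{jk}$ and $\pa_{\x_j} b_k = 2\pi a_k \d_{jk}$; this gives $\pa_{\x_j} N = -2\pi b_j P_j$ with
\begin{align*}
P_1 &= a_2 b_3^2 + a_3 b_2^2 - 2 a_1 a_2 a_3, \\
P_2 &= a_1 b_3^2 + a_3 b_1^2 - 2 a_1 a_2 a_3, \\
P_3 &= a_1 b_2^2 + a_2 b_1^2 - 2 a_1 a_2 a_3.
\end{align*}
When all $b_j$ are nonzero, parallelism of $\nabla N$ with $(b_1,b_2,b_3)$ is equivalent to $P_1 = P_2 = P_3$. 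The crucial algebraic identity, obtained by substituting $b_j^2 = 1 - a_j^2$ and $a_1+a_2+a_3 = E_{\l}$, is the factorization
\[
P_j - P_k = (a_k - a_j)(1 - a_i E_\l), \qquad (i,j,k)\ \text{a permutation of}\ (1,2,3).
\]

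Finally, I would split cases according to Proposition \ref{Gauvanish}~(i). If two of the $a_j$'s vanish, say $a_1 = a_2 = 0$, then $a_3 = E_\l \ne 0$, $b_1^2 = b_2^2 = 1$, and $b_3 \ne 0$ since $|E_\l| < 1$; a direct computation gives $P_1 = E_\l \ne 0 = P_3$, violating parallelism. Otherwise all $a_j$'s are nonzero, and this forces all $b_j$ to be nonzero as well (otherwise $\x_* \in \mathrm{Cr}(h_0)$); the bound $|a_i E_\l| \le |E_\l| < 1$ keeps every factor $1 - a_i E_\l$ away from zero, so the factorization forces $a_1 = a_2 = a_3 = E_\l/3$, and then the remaining $K = 0$ condition $\sum 1/a_j = \sum a_j$ of Proposition \ref{Gauvanish}~(i) reads $9/E_\l = E_\l$, i.e.\ $E_\l^2 = 9$, contradicting $|E_\l| < 1$. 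The main obstacle is spotting the factorization of $P_j - P_k$ above; combined with the strict inequalities $|E_\l| < 1$ (from $4 < \l < 8$) and $E_\l \ne 0$ (from $\l \ne 6$), it is what makes every case collapse simultaneously, and it also explains why the result fails at $\l = 6$ (where umbilic points of Proposition \ref{Gauvanish}~(ii) appear) and at $\l \in \{4,8\}$ (where $|E_\l| = 1$).
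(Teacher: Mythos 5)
Your route is essentially the paper's. After reducing (via $K(\x_*)=0$) to showing that $\nabla\tilde K(\x_*)$ is not parallel to $(b_1,b_2,b_3)(\x_*)$, the paper computes the cross product directly and finds that its $k$-th component is $-2\pi\,b_i b_j(a_i-a_j)\bigl(1-a_k(a_1+a_2+a_3)\bigr)$; this is precisely your factorization $P_i-P_j=(a_j-a_i)(1-a_kE_\l)$ multiplied by $b_ib_j$. Both arguments then close by observing $1-a_kE_\l\ne 0$ when $|E_\l|<1$, and by running the case split on which $a_j$'s vanish using Proposition~\ref{Gauvanish}~(i) together with $E_\l\ne 0$.

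One step is not justified as written: in the branch where all $a_j\ne 0$ you claim that some $b_j=0$ would already force $\x_*\in\mathrm{Cr}(h_0)$. That is not so — by $(\ref{crih_0})$, $\mathrm{Cr}(h_0)$ requires \emph{all three} $b_j$ to vanish, so a single $b_j=0$ (equivalently $a_j=\pm 1$) is not excluded by that remark. The claim that all $b_j\ne 0$ in this branch is nevertheless true, but it needs an extra line: if, say, $a_3=\pm 1$ and all $a_j\ne 0$, the identity $\sum a_j=\sum 1/a_j$ of Proposition~\ref{Gauvanish}~(i) reduces to $(a_1+a_2)\bigl(1-\tfrac{1}{a_1a_2}\bigr)=0$; the first factor vanishing gives $a_3=E_\l$, so $|E_\l|=1$, excluded, while the second gives $a_1a_2=1$, hence $|a_1|=|a_2|=1$ and then indeed $b_1=b_2=b_3=0$, contradicting $\x_*\notin\mathrm{Cr}(h_0)$. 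Alternatively, you can avoid the reduction to ``$P_1=P_2=P_3$'' altogether by writing out the three cross-product components $b_ib_j(P_i-P_j)$ as the paper does, which is valid no matter which $b_j$'s vanish.
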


\begin{proof}
We set
\begin{align*}
\n(\x)=(b_1,b_2,b_3),\,\, \tilde{K}(\x)=a_1a_2b_3^2+a_2a_3b_1^2+a_3a_1b_2^2.
\end{align*}
Using $\tilde{K}(\x_*)=0$, $\nabla h_0\parallel \n$ and $(\ref{Kcal})$, we see that $(\ref{vecneq})$ is equivalent to
\begin{align*}
\n(\x_*)\times \nabla_{\x} \tilde{K}(\x_*)\neq 0.
\end{align*}
A direct computation gives
\begin{align*}
\n(\x)\times \nabla_{\x} \tilde{K}(\x)=-2\pi
\begin{pmatrix}
b_2b_3(a_2-a_3)(1-a_1(a_1+a_2+a_3))\\
b_3b_1(a_3-a_1)(1-a_2(a_1+a_2+a_3))\\
b_1b_2(a_1-a_2)(1-a_3(a_1+a_2+a_3))
\end{pmatrix}.
\end{align*}
We note that $(a_1+a_2+a_3)(\x_*)=E_{\l}$. Moreover, it follows that $\l\in (4,8)$ is equivalent to $E_{\l}\in (-1,1)$. These relations with $-1\leq a_j\leq 1$ imply that for $\x\in M_{\l}$, $\n(\x)\times \nabla_{\x} \tilde{K}(\x)=0$ is equivalent to
\begin{align}\label{vanieq}
b_2b_3(a_2-a_3)=b_3b_1(a_3-a_1)=b_1b_2(a_1-a_2)=0\,\, \text{at}\,\, \x.
\end{align}
Since $\x_*\in M_{\l}\cap K^{-1}(0)$ with $\l\in (4,8)\setminus\{6\}$, $(\ref{Kcal})$ implies that $(\ref{vanieq})$ does not hold at $\x_*$. This completes the proof.

\end{proof}

\subsection{Concrete description of the Fourier transform of the surface measure}

Now we set 
\begin{align}\label{Bdef}
\pa_{\x'}^2f(\x')=-2\pi\begin{pmatrix}
\frac{a_1b_3^2+a_3b_1^2}{b_3^3}&  \frac{b_1b_2a_3}{b_3^3}\\
\frac{b_1b_2a_3}{b_3^3}& \frac{a_2b_3^2+a_3b_2^2}{b_3^3}
\end{pmatrix}
=:B(\x').
\end{align}

\begin{prop}\label{graphrep}
Let $\x_*\in M_{\l}\cap K^{-1}(0)$ with $\l\in (4,8)$ with $b_3(\x_*)\neq 0$ and $U\subset \T^3$ be a small neighborhood of $\x$ such that $U\cap M_{\l}$ has a graph representation: $U\cap M_{\l}=\{(\x',f_{\l}(\x'))\}$. Then we have

\noindent$(i)$ If $\l= 6$, then $f_{\l}$ has the following Taylor expansion near $\x_*=(\x'_*, f_{\l}(\x_*'))$:
\begin{align}\label{6Tay}
f_{\l}(\x')=f_{\l}(\x'_*)+(\pa_{\x'}f_{\l})(\x'_*)\cdot\y+\a_{12}\y_1^2\y_2+\a_{21}\y_1\y_2^2+R(\y),
\end{align}
where $\y=(\y_1,\y_2)=\x'-\x'_*$ and a real-valued function $R$ satisfies $|\pa_{\y}^{\c}R(\y)|\leq C|\y|^{\max(4-|\c|, 0)}$. Here $\a_{12}, \a_{21}\in \re\setminus \{0\}$.

\noindent $(ii)$ Suppose $\l\neq 6$. We regard $\x'-\x'_*$ as a vector in $\re^2$. Then there exists a $2\times 2$ unitary matrix $U$ such that
\begin{align*}
f_{\l}(\x')=&f_{\l}(\x'_*)+(\pa_{\x'}f_{\l})(\x'_*)\cdot U\y+\a_1\y_1^2+\a_2\y_2^2\\
&+(\a_{111}\y_1^3+3\a_{112}\y_1^2\y_2+3\a_{122}\y_1\y_2^2+\a_{222}\y_2^3)+\sum_{\substack{i+j+k+m=4 \\ i\leq j\leq k\leq m}}\a_{ijkm}\y_i\y_j\y_k\y_m+R(\y),
\end{align*}
where we set $\y=(\y_1,\y_2)=U^{-1}(\x'-\x'_*)$ and a real-valued function $R$ satisfies $|\pa_{\y}^{\c}R(\y)|\leq C|\y|^{\max(5-|\c|, 0)}$. Here $\a_1,\a_2,\a_{ij}\in \re$ for $i,j=1,2$ satisfies $(\a_1,\a_2)\neq (0,0)$ and
\begin{align*}
&\a_1\neq0 \Rightarrow (\a_{122},\a_{222})\neq (0,0),\\
&\a_2\neq0 \Rightarrow (\a_{111},\a_{112})\neq (0,0).
\end{align*}
Moreover, it follows that if $(\a_1,\a_{111})=(0,0)$ or $(\a_2,\a_{222})=(0,0)$ hold, then we have
\begin{align}
(a_1(\x_*),a_2(\x_*), a_3(\x_*))\in \{(0,0,E_{\l}), (0,E_{\l},0), (E_{\l},0,0)\}. \label{derhigh1}
\end{align}

\noindent $(iii)$ Suppose $\l\neq 6$. Let $\x_*$ satisfying 
\begin{align*}
a_1(\x_*)=a_3(\x_*)=0,\quad a_2(\x_*)=E_{\l}.
\end{align*}
Then we have
\begin{align*}
f_{\l}(\x')=&f_{\l}(\x'_*)+(\pa_{\x'}f_{\l})(\x'_*)\cdot \x'+\a_1\x_1^2+\a_2\x_2^2\\
&+(\a_{111}\x_1^3+3\a_{112}\x_1^2\x_2+3\a_{122}\x_1\x_2^2+\a_{222}\x_2^3)+\sum_{\substack{i+j+k+m=4 \\ i\leq j\leq k\leq m}}\a_{ijkm}\x_i\x_j\x_k\x_m+R(\x'),
\end{align*}
where
\begin{align*}
\a_1=\a_{111}=\a_{1111}=0,\quad \a_{112}\neq 0,
\end{align*}
hold and a real-valued function $R$ satisfies $|\pa_{\x'}^{\c}R(\x')|\leq C|\x'|^{\max(5-|\c|, 0)}$.
\end{prop}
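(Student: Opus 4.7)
At any $\x_*\in M_6\cap K^{-1}(0)$, Proposition \ref{Gauvanish}(iii) gives $\x_{*,j}\in\{1/4,3/4\}$ for $j=1,2,3$, so $a_j(\x_*)=0$ and $b_j(\x_*)=\pm 1$. Substituting into $(\ref{Bdef})$ shows $B(\x_*')=0$, so the quadratic part of the Taylor expansion vanishes. I then differentiate $\pa_{\x_1}^2 f_\l=-\tfrac{2\pi}{b_3^3}(a_1b_3^2+a_3b_1^2)$ once more, using that $\pa_{\x_j}f_\l=-b_j/b_3$ together with the chain rule yields $\pa_{\x_j}a_3=2\pi b_j$ and $\pa_{\x_j}b_3=-2\pi a_3b_j/b_3$. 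Evaluating at $\x_*$ gives $\pa_{\x_1}^3f_\l|_{\x_*'}=0$ while $\pa_{\x_1}^2\pa_{\x_2}f_\l|_{\x_*'}=-4\pi^2 b_2/b_3^3\ne 0$. The symmetry $\x_1\leftrightarrow\x_2$ then yields that both pure cubic terms vanish and both mixed cubics are nonzero, giving $(\ref{6Tay})$ with $\a_{12},\a_{21}\neq 0$.

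\textbf{Part (ii), main claims.} Since $K(\x_*)=0$ while Proposition \ref{Gauvanish}(ii) (using $\l\neq 6$) excludes an umbilic, exactly one eigenvalue of $B(\x_*')$ vanishes. Applying Lemma \ref{matrixTaylor} with the diagonalizer $U=U(\x_*')$ gives the stated expansion with $\a_1=\l_+(\x_*')/2$, $\a_2=\l_-(\x_*')/2$ and
\begin{equation*}
6\a_{111}=u_+\cdot\nabla\l_+,\quad 6\a_{112}=u_-\cdot\nabla\l_+,\quad 6\a_{122}=u_+\cdot\nabla\l_-,\quad 6\a_{222}=u_-\cdot\nabla\l_-,
\end{equation*}
all at $\x_*'$. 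Since one of $\l_\pm(\x_*')$ is nonzero, $(\a_1,\a_2)\neq(0,0)$. If $\a_1\ne 0$ then $\l_-(\x_*')=0$ and $\nabla\det B|_{\x_*'}=\l_+(\x_*')\nabla\l_-|_{\x_*'}$; combining $K_1=\det B/(1+|\nabla f_\l|^2)^2$ with Lemma \ref{nonvaneq} and Lemma \ref{nonvanih0} yields $\nabla\det B(\x_*')\neq 0$, hence $\nabla\l_-(\x_*')\neq 0$, and orthonormality of $\{u_\pm\}$ gives $(\a_{122},\a_{222})\neq(0,0)$. The case $\a_2\ne 0$ is symmetric.

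\textbf{Part (ii), final implication.} Suppose $(\a_1,\a_{111})=(0,0)$ (the other case is analogous). Then $\l_+(\x_*')=0$ and, by the same argument as above with the roles of $\l_+,\l_-$ swapped, $\nabla\l_+(\x_*')\neq 0$. The condition $\a_{111}=0$ forces $\nabla\l_+(\x_*')\perp u_+(\x_*')$. Using the standard perturbation formula $\pa_{\x_j}\l_+=u_+^T(\pa_{\x_j}B)u_+$ together with $(\ref{Bdef})$ and the defining constraint $\det B(\x_*')=0$ (equivalently, Proposition \ref{Gauvanish}(i)), this perpendicularity reduces to a polynomial identity in $a_1,a_2,a_3,b_1,b_2,b_3$. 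Factoring this identity under the constraint $a_1+a_2+a_3=E_\l$ and comparing with the four vanishing loci of Proposition \ref{Gauvanish}(i), one sees the ``generic'' locus $a_1+a_2+a_3=1/a_1+1/a_2+1/a_3$ (with all $a_j\ne 0$) is incompatible with it, so any solution has at least two $a_j$ equal to zero; combined with $a_1+a_2+a_3=E_\l$ this is exactly $(\ref{derhigh1})$.

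\textbf{Part (iii) and main obstacle.} At the prescribed $\x_*$, $(\ref{Bdef})$ directly gives $B(\x_*')=\mathrm{diag}(0,-2\pi E_\l/b_3)$; so $U=I$, $\a_1=0$ and $\a_2\neq 0$ (since $E_\l\ne 0$ for $\l\ne 6$). Writing $g=a_1b_3^2+a_3b_1^2$ so that $\pa_{\x_1}^2f_\l=-2\pi g/b_3^3$, iterated differentiation via $\pa_{\x_j}a_3=2\pi b_j$ and $\pa_{\x_j}b_3=-2\pi a_3b_j/b_3$ gives the identity $\pa_{\x_1}g=2\pi b_1(a_3^2-a_1^2)$, whence $g=\pa_{\x_1}g=\pa_{\x_1}^2g=0$ at $\x_*'$, so $\pa_{\x_1}^3f_\l|_{\x_*'}=\pa_{\x_1}^4f_\l|_{\x_*'}=0$ and hence $\a_{111}=\a_{1111}=0$. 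Meanwhile $\pa_{\x_2}g|_{\x_*'}=2\pi b_2\ne 0$ since $b_2^2=1-E_\l^2>0$ for $\l\in(4,8)\setminus\{6\}$, so $\pa_{\x_1}^2\pa_{\x_2}f_\l|_{\x_*'}=-4\pi^2 b_2/b_3^3\ne 0$ and $\a_{112}\neq 0$. The principal difficulty in the whole proof is the final implication of (ii): translating the coordinate-free condition ``$\nabla\l_+\perp u_+$'' into an algebraic identity and matching it against the curvature-vanishing stratification of Proposition \ref{Gauvanish}(i) requires some symbolic manipulation, while the other parts are direct applications of $(\ref{Bdef})$ and Lemma \ref{matrixTaylor}.
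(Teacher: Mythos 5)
Your treatment of parts (i), (ii)'s first two implications, and (iii) is correct and follows essentially the paper's own route: (i) and (iii) are read off from the implicit differentiation relations $(\ref{fdif1})$, $(\ref{fthreedif})$ and the explicit form $(\ref{Bdef})$ of $B$; the main claims of (ii) come from Lemma \ref{matrixTaylor} together with $\nabla_{\x'}\det B(\x_*')\neq 0$, which by Lemma \ref{nonvanih0} and $(\ref{Kab})$ gives that whichever of $\l_\pm$ vanishes has nonvanishing gradient. Your variant in (iii) of differentiating $g=a_1b_3^2+a_3b_1^2$ rather than re-differentiating $h_0(\x',f_\l(\x'))=\l$ directly is a small but equivalent reorganization.

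The genuine gap is the final implication of (ii), the proof of $(\ref{derhigh1})$, which you explicitly defer as ``requires some symbolic manipulation.'' In the paper this is precisely the content of Appendix \ref{appB}, and it is not a formality. The argument there reduces to showing that if all $a_j\neq 0$ and $\l_+(\x_*')=0$, then $u_+(\x_*')\cdot\nabla_{\x'}\l_+(\x_*')\neq 0$. This requires two nontrivial ingredients that you do not supply: (a) an explicit identification of the null eigenvector, namely $u_+(\x_*')\parallel(c_1,c_2)$ with $c_j=\tan 2\pi\x_j$ (Proposition \ref{nullev}), which itself relies on the reformulation of $K(\x)=0$ as $\sum_j b_jc_j=0$ when $a_1a_2a_3\neq 0$ (Lemma \ref{Gauperp}); and (b) the closed-form computation
\begin{align*}
\frac{1}{2\pi}\begin{pmatrix}c_1\\ c_2\end{pmatrix}\cdot\Bigl(\pa_{\x'}\det\bigl(\tfrac{-b_3^4}{2\pi}B\bigr)\Bigr)(\x_*')=(E_\l^2-1)(3-E_\l\, a_1a_2a_3),
\end{align*}
obtained by repeatedly using $a_1+a_2+a_3=E_\l$, $a_1a_2+a_2a_3+a_3a_1=E_\l a_1a_2a_3$, and $b_j^2=a_jb_jc_j$. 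Since $E_\l\in(-1,1)$ and $|a_1a_2a_3|\le 1$, this is nonzero, which is what rules out the ``generic'' stratum of $K^{-1}(0)\cap M_\l$. Without this computation (or an equivalent one) the implication $(\ref{derhigh1})$ is unproved. Your phrase ``at least two $a_j$ equal to zero'' is also slightly off as stated: the point is that once a single $a_j=0$, the curvature-vanishing relation $(\ref{vani})$ forces a second one, and then $a_1+a_2+a_3=E_\l$ pins the third; you should make that chain explicit rather than appeal to a ``stratification.''

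A smaller remark: you invoke the Feynman--Hellmann formula $\pa_{\x_j}\l_+=u_+^T(\pa_{\x_j}B)u_+$, whereas the paper instead works with $u_+\cdot\nabla\l_+=\l_-^{-1}\,u_+\cdot\nabla\det B$ and then substitutes the explicit $u_+\parallel(c_1,c_2)$. Both are legitimate, but the paper's route is what makes the resulting polynomial identity tractable; if you insist on the perturbation formula you will still need the explicit eigenvector and the same algebraic simplifications, so nothing is saved.
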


\begin{proof}
$(i)$ Let $\x_*\in M_{6}\cap K^{-1}(0)$. Proposition \ref{Gauvanish} implies that $(\x_*)_j\in \{1/4, 1/3\}$ for each $j=1,2,3$ (which automatically implies $b_3(\x_*)\neq 0$). We prove $(\ref{6Tay})$ only for $(\x_*)_j=1/4$, $j=1,2,3$. The other cases are similarly proved. Differentiating $h_0(\x',f_{\l}(\x'))=6$ three times, we have $(\ref{fdif1})$ and
\begin{align}
\pa_{\x_m}\pa_{\x_k}\pa_{\x_l}f_{\l}(\x')=&\frac{4\pi^2}{b_3}((\d_{kl}\d_{lm}b_k-\frac{b_mb_lb_k}{b_3^2})\nonumber\\
&-\frac{a_3}{b_3^4}(\d_{kl}a_kb_mb_3^2+\d_{lm}a_lb_kb_3^2+\d_{mk}a_mb_lb_3^2+3b_mb_kb_la_3)  )\label{fthreedif}
\end{align}
for $j,k,l=1,2$. Substituting this into $\x_*=(1/4,1/4,1/4)$, we obtain
\begin{align*}
\pa_{\x_j}^3f_{\l}(\x_*')=0,\,\, \pa_{\x_1}^2\pa_{\x_2}f_{\l}(\x_*')=\pa_{\x_1}^2\pa_{\x_2}f_{\l}(\x_*')=-4\pi^2.
\end{align*}
for $j=1,2$. Taylor expanding $f_{\l}$, we obtain $(\ref{6Tay})$.

$(ii)$ We recall $B$ is the matrix defined in $(\ref{Bdef})$.  We denote the eigenvalues of $B$ at $\x'$ by $\l_+(\x')$ and $\l_-(\x')$. 
Now $(\ref{Kab})$, $(\ref{Bdef})$ and Lemma \ref{nonvanih0} imply $\nabla_{\x'}\det B(\x_*')\neq 0$. Thus we have
\begin{align*}
(\nabla_{\x'}\l_+)(\x_*')\l_-(\x_*')+(\nabla_{\x'}\l_-)(\x_*')\l_+(\x_*')\neq 0.
\end{align*}
Since $\l_+(\x_*')\l_{-}(\x_*')=0$ and $(\l_+(\x_*'), \l_-(\x_*'))\neq (0,0)$, we have
\begin{align}
&\l_+(\x_*')=0\Rightarrow \nabla_{\x'}\l_-(\x_*')\neq 0, \label{derlam+}\\
&\l_-(\x_*')=0 \Rightarrow \nabla_{\x'}\l_+(\x_*')\neq 0  \label{derlam-}.
\end{align}
Note that $\l_+(\x'_*)$ and $\l_-(\x'_*)$ are distinct by virtue of Proposition \ref{Gauvanish}. Then \cite[Theorem XII.4]{RS} implies that $\l_+(\x')$ and $\l_-(\x')$ are analytic near $\x_*'$ and the corresponding unit eigenvectors $u_+(\x')$ and $u_-(\x')$ can be chosen to be analytic near $\x_*'$. Now our claim follows from Lemma \ref{matrixTaylor}, $(\ref{derlam+})$ and $(\ref{derlam-})$. $(\ref{derhigh1})$ will be proved in Appendix \ref{appB}.

$(iii)$ It suffices to prove $\a_{112}\neq 0$ and $\a_1=\a_{111}=\a_{1111}=0$, that is,
\begin{align*}
\pa_{\x_1}^2\pa_{\x_2}f_{\l}(\x_*')\neq 0,\quad \pa_{\x_1}^2f_{\l}(\x_*')=\pa_{\x_1}^3f_{\l}(\x_*')=\pa_{\x_1}^4f_{\l}(\x_*')=0.
\end{align*}
The relations $\pa_{\x_1}^2\pa_{\x_2}f_{\l}(\x_*')\neq 0$ and $\pa_{\x_1}^2f_{\l}(\x_*')=\pa_{\x_1}^3f_{\l}(\x_*')=0$ directly follow from $(\ref{fdif1})$ and $(\ref{fthreedif})$.
Differentiating $h_0(\x',f_{\l}(\x'))=\l$ four times in $\x_1$-variable, we have 
\begin{align*}
-&8\pi^3a_1+(\pa_{\x_1}^4f_{\l})b_3+8\pi(\pa_{\x_1}^3f_{\l})(\pa_{\x_1}f_{\l})a_3+6\pi(\pa_{\x_1}^2f_{\l})^2a_3\\
&-24\pi^2(\pa_{\x_1}^2f_{\l})(\pa_{\x_1}f_{\l})^2b_3-12\pi^3(\pa_{\x_1}f_{\l})^4a_3=0.
\end{align*}
Substituting $a_1(\x_*)=a_3(\x_*)=\pa_{\x_1}^2f_{\l}(\x_*')=\pa_{\x_1}^3f_{\l}(\x_*')=0$, we obtain $\pa_{\x_1}^4f_{\l}(\x_*')=0$.

\end{proof}

\section{Proof of Theorem \ref{mainpropuni}}
By permutating the coordinate, we may assume $\pa_{\x_3}h_0(\x)\neq 0$ on $\supp \chi$. We use the following representation: 
\begin{align*}
\widehat{\chi d\s_{M_{\l}}}(\x)=\int_{\mathbb{T}^3}\chi(\x',f_{\l}(\x'))e^{2\pi i(x_1\x_1+x_2\x_2+x_3f_{\l}(\x'))}   \frac{d\x'}{|\nabla h_0(\x', f_{\l}(\x'))|},\quad \x'=(\x_1,\x_2),
\end{align*}
where we write $M_{\l}=\{(\x', f_{\l}(\x'))\}$ locally.

Theorem \ref{mainpropuni} $(i)$ and $(iv)$ directly follows from Proposition \ref{Gauvanish} $(i)$, $(ii)$ and the stationary phase theorem. See \cite[Chapter VIII, \S 3, Theorem 1]{S}. Theorem \ref{mainpropuni} $(ii)$ follows from Proposition \ref{osidecay} $(i)$ and Proposition \ref{graphrep} $(i)$. Moreover,  Proposition \ref{osidecay} $(ii)$ and Proposition \ref{graphrep} $(ii)$, $(iii)$ imply Theorem \ref{mainpropuni} $(ii)$ (if necessary, permuting the coordinate). We finish the proof.

\appendix

\section{Equivalence of uniform resolvent estimates}

Next elementary lemma follows from the H\"older inequality and the duality argument. 

\begin{lem}\label{unifequi}
Let $p\in [1,2]$ and $r\in [2,\infty]$ satisfying
\begin{align*}
\frac{1}{p}=\frac{1}{2}+\frac{1}{r}.
\end{align*}
Set $p'=p/(p-1)$. Then
\begin{align}\label{unifl^p}
\|A\|_{B(l^{p}(\ze^d),l^{p'}(\ze^d))}\leq C
\end{align}
is equivalent to
\begin{align}\label{unifpot}
\|W_1AW_2\|_{B(l^2(\ze^d))}\leq C\|W_1\|_{l^{r}(\ze^d)}\|W_2\|_{l^{r}(\ze^d)}\quad \text{for}\quad W_1, W_2\in l^r(\ze^d)
\end{align}
with a same constant $C>0$.
\end{lem}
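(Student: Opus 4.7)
The plan is to prove the two implications by separate Hölder-type arguments, exploiting throughout the algebraic identity $1/p = 1/2 + 1/r$ (equivalently $(1-p/2) r = p$).

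For the forward implication (\ref{unifl^p}) $\Rightarrow$ (\ref{unifpot}), I would apply Hölder's inequality twice. For $f \in l^2(\ze^d)$, the identity on exponents yields $\|W_2 f\|_{l^p} \le \|W_2\|_{l^r} \|f\|_{l^2}$, and the hypothesis (\ref{unifl^p}) gives $\|A W_2 f\|_{l^{p'}} \le C \|W_2\|_{l^r} \|f\|_{l^2}$. Dualizing through the natural $(l^p, l^{p'})$ pairing and applying the symmetric estimate $\|W_1 g\|_{l^p} \le \|W_1\|_{l^r} \|g\|_{l^2}$ for $g \in l^2(\ze^d)$ yields
\begin{align*}
|\langle W_1 A W_2 f, g\rangle| = |\langle A W_2 f, W_1 g\rangle| \le \|A W_2 f\|_{l^{p'}} \|W_1 g\|_{l^p} \le C \|W_1\|_{l^r} \|W_2\|_{l^r} \|f\|_{l^2} \|g\|_{l^2},
\end{align*}
and (\ref{unifpot}) follows with the same constant $C$ by taking the supremum over $\|g\|_{l^2} = 1$.

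For the reverse direction (\ref{unifpot}) $\Rightarrow$ (\ref{unifl^p}), the key is a polar-style factorization of the $l^p$ functions. Given $f, g \in l^p(\ze^d)$, I write
\begin{align*}
f = W_2 u, \qquad W_2(n) = |f(n)|^{1-p/2}, \qquad u(n) = \sgn(f(n)) \, |f(n)|^{p/2},
\end{align*}
and similarly $g = W_1 v$. Using $(1-p/2) r = p$, a direct computation shows $\|W_2\|_{l^r}^r = \|u\|_{l^2}^2 = \|f\|_{l^p}^p$, whence $\|W_2\|_{l^r} \|u\|_{l^2} = \|f\|_{l^p}$; likewise $\|W_1\|_{l^r} \|v\|_{l^2} = \|g\|_{l^p}$. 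Then
\begin{align*}
|\langle Af, g\rangle| = |\langle W_1 A W_2 u, v\rangle| \le \|W_1 A W_2\|_{B(l^2(\ze^d))} \, \|u\|_{l^2} \|v\|_{l^2} \le C \|f\|_{l^p} \|g\|_{l^p},
\end{align*}
and (\ref{unifl^p}) follows with the same constant $C$ by duality.

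I do not expect any serious obstacle: the whole argument is routine bookkeeping with Hölder's inequality and the $(l^p, l^{p'})$ duality. The only care needed is at the endpoints. At $p = 2$ (so $r = \infty$) the factorization degenerates and one simply takes $W_2 \equiv 1$, in which case both estimates reduce to $\|A\|_{B(l^2(\ze^d))} \le C$; at $p = 1$ (so $r = 2$) the factorization gives $W_2 = |f|^{1/2}$ with both factors in $l^2(\ze^d)$, and the argument goes through unchanged. A minor subtlety is consistency of complex conjugation in the pairing when the weights are allowed complex-valued, but this causes no essential difficulty since $|\overline{W_j} g| = |W_j g|$.
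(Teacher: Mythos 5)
Your proposal is correct and follows essentially the same approach as the paper: H\"older's inequality for the implication (\ref{unifl^p}) $\Rightarrow$ (\ref{unifpot}), and the factorization $f \mapsto \big(|f|^{1-p/2},\, \sgn(f)\,|f|^{p/2}\big)$ for the converse. The only cosmetic difference is that the paper keeps $W_2$ arbitrary and factorizes one side at a time (first obtaining $\|AW_2\|_{B(l^2,l^{p'})}\leq C\|W_2\|_{l^r}$, then applying the same trick again), whereas you factorize both sides in a single step.
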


\begin{proof}
Let $u\in l^2(\ze^d)$.
The H\"older inequality with $(\ref{unifl^p})$ implies
\begin{align*}
\|W_1AW_2u\|_{l^2(\ze^d)}\leq& \|W_1\|_{l^r(\ze^d)}\|AW_2u\|_{l^{p'}(\ze^d)}\\
\leq&C\|W_1\|_{l^r(\ze^d)}\|W_2u\|_{l^{p}(\ze^d)}\\
\leq&C\|W_1\|_{l^r(\ze^d)}\|W_2\|_{l^{r}(\ze^d)}\|u\|_{l^{2}(\ze^d)}.
\end{align*}
Thus we have $(\ref{unifpot})$. Conversely, assume $(\ref{unifpot})$ and fix $W_2\in l^r(\ze^d)$. First, we prove $\|AW_2\|_{B(l^2(\ze^d),l^{p'}(\ze^d))}\leq C\|W_2\|_{l^r(\ze^d)}$. Let $u,w$ be finitely supported functions. Then $(\ref{unifl^p})$ implies
\begin{align*}
|(w, AW_2u)_{l^2(\ze^d)}|=&|(|w|^{\frac{p}{2}}\sgn w, |w|^{1-\frac{p}{2}}AW_2u)_{l^2(\ze^d)}|\\
\leq&C\||w|^{\frac{p}{2}}\|_{l^2(\ze^d)}\||w|^{1-\frac{p}{2}}\|_{l^r(\ze^d)} \|W_2\|_{l^r(\ze^d)}\|u\|_{l^2(\ze^d)}\\
=&C\|W_2\|_{l^r(\ze^d)}\|w\|_{l^p(\ze^d)} \|u\|_{l^2(\ze^d)}.
\end{align*}
Thus we have $\|AW_2\|_{B(l^2(\ze^d),l^{p'}(\ze^d))}\leq C\|W_2\|_{l^r(\ze^d)}$. Similar argument also implies $(\ref{unifl^p})$.
\end{proof}

\section{Proof of $(\ref{derhigh1})$}\label{appB}

In this appendix, we prove $(\ref{derhigh1})$.
We introduce the notation 
\begin{align*}
a_j=a_j(\x)=\cos2\pi \x_j,\,\, b_j=b_j(\x)=\sin 2\pi \x_j,\,\, c_j=c_j(\x)=\tan 2\pi \x_j=\frac{b_j}{a_j} 
\end{align*}
and recall $M_{\l}=h_0^{-1}(\{\l\})$ and $E_{\l}=3-\l/2$. We note
\begin{align*}
E_{\l}\in (-1,1)\Leftrightarrow \l\in (4,8).
\end{align*}
For $\x\in M_{\l}$ with $b_3(\x)\neq 0$, we write
\begin{align*}
\x=(\x',f_{\l}(\x)).
\end{align*}
We recall 
\begin{align*}
B(\x')=\pa_{\x'}^2f_{\l}(\x')=-2\pi\begin{pmatrix}
\frac{a_1b_3^2+a_3b_1^2}{b_3^3}&  \frac{b_1b_2a_3}{b_3^3}\\
\frac{b_1b_2a_3}{b_3^3}& \frac{a_2b_3^2+a_3b_2^2}{b_3^3}
\end{pmatrix}
\end{align*}
and denote the eigenvalues of $B(\x')$ by $\l_+(\x')$ and $\l_-(\x')$ and the corresponding eigenvectors by $u_+(\x')$ and $u_-(\x')$. 

In order to prove $(\ref{derhigh1})$, by permuting the coordinate, it suffices to prove
\begin{align*}
\l_+(\x_*')=u_+(\x_*')\cdot \pa_{\x'}\l_+(\x_*')=0\Rightarrow (a_1(\x_*),a_2(\x_*), a_3(\x_*))\in \{(0,0,E_{\l}), (0,E_{\l},0), (E_{\l},0,0)\}
\end{align*}
if we suppose $E_{\l}\in (-1,1)\setminus \{0\}$. We recall $\l_-(\x_*')\neq 0$ if $\l_+(\x_*')=0$ by Proposition \ref{Gauvanish} and by the condition $E_{\l}\neq 0$. Since 
\begin{align*}
a_j=0\quad \text{for some}\,\, j=1,2,3\,\, \Rightarrow  (a_1,a_2, a_3)\in \{(0,0,E_{\l}), (0,E_{\l},0), (E_{\l},0,0)\},
\end{align*}
at $\x\in K^{-1}(\{0\})\cap M_{\l}$, we only need to prove
\begin{align}\label{suffice}
a_j\neq 0\quad \text{for all}\,\, j=1,2,3\,\, \text{and}\,\, \l_+(\x_*)=0 \Rightarrow u_+(\x_*')\cdot \pa_{\x'}\l_+(\x_*')\neq 0.
\end{align}
First, we compute the null eigenvector of $B(\x_*')$.
\begin{prop}\label{nullev}
Suppose $E_{\l}\in (-1,1)\setminus \{0\}$. If $\x_*=(\x_*', f_{\l}(\x_{*}'))\in M_{\l}\cap K^{-1}(\{0\})\setminus \left(\{a_1=0\}\cup \{a_2=0\}\cup \{b_3= 0\}\right)$ satisfies $\l_+(\x_*')=0$, then we have
\begin{align*}
u_+(\x_*')\parallel \begin{pmatrix}c_1(\x_*)\\ c_2(\x_*)\end{pmatrix}.
\end{align*}

\end{prop}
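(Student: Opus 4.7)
\textbf{Proof plan for Proposition \ref{nullev}.}

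The strategy is a direct computation: I would verify that the vector $v=(c_1(\x_*),c_2(\x_*))^T$ lies in the kernel of $B(\x_*')$, and then argue that the kernel is one-dimensional, forcing $u_+(\x_*')$ to be parallel to $v$.

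First I would substitute $v=(b_1/a_1, b_2/a_2)^T$ into the linear system $B(\x_*')v = 0$. Using the explicit form of $B$, the first component of $B(\x_*')v$ equals a scalar multiple of
\[
b_1\Bigl[a_1a_2b_3^2 + a_3(a_2b_1^2 + a_1b_2^2)\Bigr],
\]
and the second component is the analogous expression with $b_1$ replaced by $b_2$ (the bracket being the same by symmetry of the roles of the indices $1$ and $2$). Expanding $b_j^2 = 1-a_j^2$, the bracket simplifies to
\[
a_1a_2 + a_2a_3 + a_3a_1 - a_1a_2a_3(a_1+a_2+a_3).
\]
By Proposition \ref{Gauvanish} $(i)$, this quantity vanishes exactly on $K^{-1}(0)\cap M_\l$, so $v\in\Ker B(\x_*')$.

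Next, I would check that $v\ne (0,0)$ under the standing hypotheses. Since $a_1(\x_*), a_2(\x_*)\ne 0$, the only way to have $v=0$ is $b_1(\x_*)=b_2(\x_*)=0$. In that case $a_1,a_2\in\{\pm1\}$, and a short check using $a_1+a_2+a_3=E_\l\in(-1,1)\setminus\{0\}$ together with $|a_3|\le 1$ forces $a_1=-a_2$ and $a_3=E_\l$; but then the curvature formula \eqref{Kcal} gives $K(\x_*)\propto a_1a_2b_3^2=-(1-E_\l^2)\ne 0$, contradicting $\x_*\in K^{-1}(0)$. Hence $v\ne 0$.

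Finally I would rule out the possibility that $\Ker B(\x_*')$ is two-dimensional. By the assumption $\l_+(\x_*')=0$ and the part $(ii)$ of Proposition \ref{Gauvanish}, umbilic-type vanishing of all principal curvatures only occurs when $\l=6$ and $\x_j\in\{1/4,3/4\}$ for all $j$; both are excluded by $E_\l\in(-1,1)\setminus\{0\}$ (which forces $\l\ne 6$) and by $a_1,a_2\ne 0$. Therefore $\l_-(\x_*')\ne 0$, the kernel of $B(\x_*')$ is spanned by a single unit vector, and since $v$ is a nonzero element of this kernel, $u_+(\x_*')\parallel v=(c_1(\x_*),c_2(\x_*))^T$, which is the claim. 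The only computation that requires any care is the algebraic simplification of the kernel equation into the form recognized in Proposition \ref{Gauvanish} $(i)$; everything else is bookkeeping with the vanishing hypotheses.
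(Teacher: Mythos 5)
Your proposal is correct and follows essentially the same route as the paper: verify that $(c_1,c_2)^T$ lies in $\Ker B(\x_*')$ (the paper does this in Lemma \ref{nullevlem}, using the orthogonality $b\perp c$ from Lemma \ref{Gauperp}, while you substitute $b_j^2=1-a_j^2$ and invoke the criterion $(\ref{vani})$ — both are equivalent ways of recognizing the numerator of $K$), then check this vector is nonzero (paper's Lemma \ref{b_12nonvani}), and finally conclude $\dim\Ker B=1$ via Proposition \ref{Gauvanish}$(ii)$ exactly as the paper does.
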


We will prove this proposition in the next subsection.
It follows from this proposition that under the condition $\l_+(\x_*')=0$, the equation $u_+(\x_*')\cdot \pa_{\x'}\l_+(\x_*')=0$ is equivalent to
\begin{align*}
\begin{pmatrix}c_1(\x_*)\\ c_2(\x_*)\end{pmatrix}\cdot (\pa_{\x}\det B)(\x_*')=0
\end{align*}
if $a_1(\x_*)\neq 0$ and $a_2(\x_*)\neq 0$ are satisfied. 
Since $\det B(\x_*')=0$, this equation is also equivalent to
\begin{align}\label{evdervan}
\begin{pmatrix}c_1(\x_*)\\ c_2(\x_*)\end{pmatrix}\cdot (\pa_{\x}\det(\frac{-b_3^4}{2\pi} B))(\x_*')=0.
\end{align}

\begin{lem}
Suppose $E_{\l}\in (-1,1)\setminus \{0\}$. For $\x=(\x', f_{\l}(\x'))\in M_{\l}$, we have
\begin{align*}
\pa_{\x}\det(\frac{-b_3^4}{2\pi} B)=-2\pi\begin{pmatrix}
b_1(a_3-a_1)(1-E_{\l}a_2)\\
b_2(a_3-a_2)(1-E_{\l}a_1)
\end{pmatrix}.
\end{align*}

\end{lem}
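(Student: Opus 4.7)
The proof will be a direct algebraic calculation. The plan is first to identify the determinant in closed form, then differentiate along $M_\l$ using the graph representation $\x_3 = f_\l(\x')$, and finally to simplify using the trigonometric Pythagorean identities together with the defining equation of the Fermi surface.

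First I would compute the determinant explicitly: expanding the $2\times 2$ matrix in $(\ref{Bdef})$, the off-diagonal contribution $a_3^2 b_1^2 b_2^2$ cancels the corresponding cross-term in the diagonal product, and one obtains $\det(-\frac{b_3^4}{2\pi} B) = b_3^4 \tilde K$, where $\tilde K = a_1 a_2 b_3^2 + a_1 a_3 b_2^2 + a_2 a_3 b_1^2$ is the numerator of the Gaussian curvature from $(\ref{Kcal})$. Up to the non-vanishing scalar $b_3^4$, the problem reduces to computing $\pa_{\x'}^{\text{surf}} \tilde K$ on $M_\l$.

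Next I would write the tangential derivative along $M_\l$ in the form
$$\pa_{\x_j}^{\text{surf}} F = \pa_{\x_j} F - \frac{b_j}{b_3}\pa_{\x_3} F, \qquad j=1,2,$$
using $\pa_{\x_j} f_\l = -b_j/b_3$ from $(\ref{fdif1})$, and apply it to $F = \tilde K$ via $\pa_{\x_j} a_k = -2\pi b_j \delta_{jk}$ and $\pa_{\x_j} b_k^2 = 4\pi a_j b_j \delta_{jk}$. A direct calculation gives
$$\pa_{\x_1}\tilde K = -2\pi b_1[a_2 b_3^2 + a_3 b_2^2 - 2a_1 a_2 a_3], \quad \pa_{\x_3}\tilde K = -2\pi b_3[a_1 b_2^2 + a_2 b_1^2 - 2a_1 a_2 a_3],$$
and when these are combined, the cubic term $2 a_1 a_2 a_3$ cancels, leaving
$$\pa_{\x_1}^{\text{surf}}\tilde K = -2\pi b_1\bigl[a_2(b_3^2 - b_1^2) + (a_3-a_1)b_2^2\bigr].$$

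The final step is the algebraic simplification. Using $a_k^2 + b_k^2 = 1$ I rewrite $b_3^2 - b_1^2 = (a_1-a_3)(a_1+a_3)$ and $b_2^2 = 1 - a_2^2$, and factor out $(a_1-a_3)$ to get $-2\pi b_1(a_1-a_3)[a_1 a_2 + a_2 a_3 + a_2^2 - 1]$. The Fermi-surface constraint $a_1+a_2+a_3 = E_\l$ turns the bracket into $a_2 E_\l - 1$, yielding the stated formula $-2\pi b_1(a_3-a_1)(1-E_\l a_2)$; the $j=2$ component follows by the symmetric substitution $(a_1,b_1) \leftrightarrow (a_2,b_2)$.

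The main obstacle is purely the bookkeeping in the simplification step: one must make the substitutions from $a_k^2+b_k^2=1$ and from the surface equation at precisely the right moments so that the factor $(a_3-a_j)$ appears cleanly. The auxiliary factor $b_3^4$ carried by the determinant is harmless for the intended application in $(\ref{evdervan})$, since the lemma is invoked at points where $\tilde K$ vanishes and only the nonvanishing of a dot product matters.
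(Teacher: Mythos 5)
Your proof is correct and follows essentially the same route as the paper's: compute the determinant, take the tangential derivative along the graph $\x_3=f_\l(\x')$, and simplify using $a_k^2+b_k^2=1$ together with the Fermi-surface constraint $a_1+a_2+a_3=E_\l$. You have also correctly observed that $\det\bigl(\tfrac{-b_3^4}{2\pi}B\bigr)=b_3^4\tilde{K}$ rather than $\tilde{K}$; the paper's own proof silently identifies its scalar ``$M$'' with $\tilde{K}$, so the lemma as literally stated carries an extraneous factor $b_3^4$, and your remark that this is immaterial because the lemma is only invoked at zeros of $\tilde{K}$ in $(\ref{evdervan})$ is precisely the right way to reconcile the discrepancy.
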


\begin{proof}
We set $M=\frac{-b_3^4}{2\pi} B$.
A direct calculation gives
\begin{align*}
M=&a_1a_2b_3^2+a_1a_3b_2^2+a_2a_3b_1^2.
\end{align*}
Since $\pa_{\x_j}a_3=-2\pi b_3(\pa_{\x_j}f_{\l})=2\pi b_j$ and $\pa_{\x_j}b_3=2\pi a_3(\pa_{\x_j}f_{\l})=-2\pi b_ja_3/b_3$ (recall $(\ref{fdif1})$), we obtain
\begin{align*}
\pa_{\x_1}M(\x)=&2\pi b_1(a_1-a_3)(1-a_2(a_1+a_2+a_3))\\
\pa_{\x_2}M(\x)=&2\pi b_2(a_2-a_3)(1-a_1(a_1+a_2+a_3)).
\end{align*}
This completes the proof.
\end{proof}

Now we compute the left hand side of $(\ref{evdervan})$. We assume $a_1\neq 0$, $a_2\neq 0$ and $a_3\neq 0$. Using the relations $a_jb_jc_j=b_j^2$, we have
\begin{align*}
\frac{1}{2\pi}\begin{pmatrix}c_1\\ c_2\end{pmatrix}\cdot (\pa_{\x}\det(\frac{-b_3^4}{2\pi} B))=&(b_1c_1+b_2c_2)a_3-b_1^2-b_2^2\\
&+E_{\l}\left(a_1a_2(b_1c_1+b_2c_2)-a_3(b_1c_1a_2+a_1b_2c_2) \right).
\end{align*}
Since $\sum_{j=1}^3b_jc_j=0$ which is proved in Lemma \ref{Gauperp} below, we obtain
\begin{align*}
\frac{1}{2\pi}\begin{pmatrix}c_1\\ c_2\end{pmatrix}\cdot (\pa_{\x}\det(\frac{-b_3^4}{2\pi} B))=&-a_3b_3c_3-b_1^2-b_2^2\\
&-E_{\l}(a_1a_2b_3c_3+a_3(b_1c_1a_2+a_1b_2c_2))\\
=&-b_1^2-b_2^2-b_3^2-E_{\l}\left(\frac{a_1a_2b_3^2}{a_3}+\frac{a_2a_3b_1^2}{a_1}+\frac{a_3a_1b_2^2}{a_2} \right)\\
=&-b_1^2-b_2^2-b_3^2-E_{\l}\left(\frac{a_1^2a_2^2+a_2^2a_3^2+a_3^2a_1^2}{a_1a_2a_3}-3a_1a_2a_3\right).
\end{align*}
From the relations (see $(\ref{vani})$)
\begin{align*}
a_1+a_2+a_3=E_{\l},\quad a_1a_2+a_2a_3+a_3a_1=E_{\l}a_1a_2a_3,
\end{align*}
we obtain
\begin{align*}
a_1^2a_2^2+a_2^2a_3^2+a_3^2a_1^2=E_{\l}a_1a_2a_3(E_{\l}a_1a_2a_3-2),\quad b_1^2+b_2^2+b_3^2=3-E_{\l}^2+2E_{\l}a_1a_2a_3.
\end{align*}
Thus we have
\begin{align*}
\frac{1}{2\pi}\begin{pmatrix}c_1\\ c_2\end{pmatrix}\cdot (\pa_{\x}\det(\frac{-b_3^4}{2\pi} B))=&E_{\l}^2-3-2E_{\l}a_1a_2a_3-E_{\l}\left(E_{\l}(E_{\l}a_1a_2a_3-2) -3a_1a_2a_3\right)\\
=&-a_1a_2a_3E_{\l}^3+3E_{\l}^2+a_1a_2a_3E_{\l}-3\\
=&(E_{\l}^2-1)(-a_1a_2a_3E_{\l}+3)\\
\neq&0
\end{align*}
since $E_{\l}\in (-1,1)$. This proves $(\ref{suffice})$.

\subsection{Proof of Proposition \ref{nullev}}

We need the following lemmas.

\begin{lem}\label{Gauperp}
Suppose $E_{\l}\in [-1,1]$. Then, for $\x\in M_{\l}\setminus\{a_1=0\}\cup \{a_2=0\}\cup \{a_3=0\}$, $K(\x)=0$ holds if and only if $b(\x)=(b_1(\x),b_2(\x),b_3(\x))\perp c(\x)=(c_1(\x),c_2(\x),c_3(\x))$,
where we recall $K(\x)$ is the Gaussian curvature at $\x\in M_{\l}$ which is defined in Lemma \ref{nonvaneq}.
\end{lem}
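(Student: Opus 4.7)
The assertion reduces to an elementary algebraic identity, so essentially no analysis is required. Recall from formula (\ref{Kcal}) that
\[
K(\x) = \frac{4\pi^2\bigl(a_1 a_2 b_3^2 + a_2 a_3 b_1^2 + a_3 a_1 b_2^2\bigr)}{(b_1^2 + b_2^2 + b_3^2)^2}.
\]
Away from $\mathrm{Cr}(h_0)$, the denominator is strictly positive, so $K(\x) = 0$ is equivalent to the vanishing of the numerator
\[
N(\x) := a_1 a_2 b_3^2 + a_2 a_3 b_1^2 + a_3 a_1 b_2^2.
\]
At critical points of $h_0$ one has $b_j(\x) = 0$ for all $j$, so $b(\x) \cdot c(\x) = 0$ holds trivially and both sides of the claimed equivalence agree, whence it is enough to treat the non-critical case.

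The key observation is that the hypothesis $a_1 a_2 a_3 \neq 0$ permits us to divide $N(\x)$ by $a_1 a_2 a_3$ without altering its zero set. This gives
\[
\frac{N(\x)}{a_1 a_2 a_3} = \frac{b_1^2}{a_1} + \frac{b_2^2}{a_2} + \frac{b_3^2}{a_3}.
\]
Since $b_j^2/a_j = b_j \cdot (b_j/a_j) = b_j c_j$, the right-hand side is exactly $b(\x) \cdot c(\x)$, and the desired equivalence $K(\x) = 0 \Leftrightarrow b(\x) \perp c(\x)$ follows.

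There is no genuine obstacle in this argument; the only point worth noting is that the hypothesis $a_j \neq 0$ for all $j$ is precisely what makes the decisive division legal. The assumption $E_\l \in [-1, 1]$ plays no role in the algebraic manipulation itself, but, by Proposition \ref{Gauvanish}(i), it is exactly the range of $\l$ in which $K^{-1}(0) \cap M_\l \setminus \mathrm{Cr}(h_0)$ can be nonempty, so the lemma is not vacuous in the application to the proof of Proposition \ref{nullev} and (\ref{suffice}).
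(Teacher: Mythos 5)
Your argument is correct and essentially the same as the paper's: both reduce $K(\x)=0$ to the vanishing of $a_1a_2b_3^2+a_2a_3b_1^2+a_3a_1b_2^2$ via $(\ref{Kcal})$ and then use $b_j^2=a_jb_jc_j$ (your division by $a_1a_2a_3$ is just the factored form of this) to rewrite it as $b\cdot c=0$. Your aside about critical points is superfluous and slightly off — $K$ is simply not defined on $\mathrm{Cr}(h_0)$, so there is no second case to check — but it does not affect the validity of the main computation.
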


\begin{proof}
By virtue of $(\ref{Kcal})$, $K(\x)=0$ if and only if
\begin{align*}
a_1a_2b_3^2+a_2a_3b_1^2+a_3a_1b_2^2=0\quad \text{at}\quad \x.
\end{align*}
Since $b_j^2=a_jb_jc_j$, this equation is equivalent to
\begin{align*}
a_1a_2a_3(b_1c_1+b_2c_2+b_3c_3)=0\quad \text{at}\quad \x.
\end{align*}
This is also equivalent to $b(\x)\perp c(\x)$ under the conditions $a_1\neq 0$, $a_2\neq 0$ and $a_3\neq 0$.
\end{proof}

\begin{lem}\label{b_12nonvani}
Suppose $E_{\l}\in (-1,1)$. If $\x\in K^{-1}(\{0\})\cap M_{\l}$, then we have $(b_1(\x), b_2(\x))\neq (0,0)$. In particular, we obtain $(c_1(\x), c_2(\x))\neq (0,0)$.

\end{lem}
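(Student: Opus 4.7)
The plan is to proceed by contradiction, combining the Fermi condition with the explicit curvature formula $(\ref{Kcal})$. Suppose for some $\x \in K^{-1}(\{0\}) \cap M_\l$ with $E_\l \in (-1,1)$ we have $b_1(\x) = b_2(\x) = 0$.

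First, $b_j(\x) = 0$ forces $\x_j \in \{0, 1/2\}$ modulo $\ze$, and hence $a_j(\x) \in \{\pm 1\}$ for $j = 1, 2$. The Fermi relation $a_1 + a_2 + a_3 = E_\l$ then gives $a_3 = E_\l - (a_1 + a_2)$ with $a_1 + a_2 \in \{-2, 0, 2\}$. Since $|E_\l| < 1$ and $|a_3| \le 1$, the cases $a_1 + a_2 = \pm 2$ are excluded, so $a_1 + a_2 = 0$, whence $a_1 a_2 = -1$ and $a_3 = E_\l$. The strict inequality $|E_\l| < 1$ yields $b_3^2 = 1 - E_\l^2 > 0$.

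Next, I substitute $b_1 = b_2 = 0$ into the curvature formula $(\ref{Kcal})$. The numerator collapses to $a_1 a_2 b_3^2 = -b_3^2 < 0$, while the denominator $(b_1^2+b_2^2+b_3^2)^2 = b_3^4$ is positive. Thus $K(\x) = -4\pi^2/b_3^2 < 0$, contradicting $K(\x) = 0$. This proves $(b_1(\x), b_2(\x)) \neq (0, 0)$. For the ``in particular'' assertion, at any point where $c_j = b_j/a_j$ is defined (i.e. $a_j \neq 0$) one has $c_j = 0 \iff b_j = 0$, so the conclusion transfers immediately.

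The argument is essentially a finite-case elimination, so no single step poses a real obstacle. The only point requiring care is the use of the \emph{strict} bound $|E_\l| < 1$: this is what guarantees $b_3^2 > 0$ and rules out the threshold cases $\l \in \{4, 8\}$, where $b_3$ could vanish and the contradiction would fail.
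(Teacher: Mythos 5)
Your proof is correct and follows essentially the same route as the paper's: both eliminate the case $a_1 = a_2 = \pm 1$ via the Fermi relation and $|E_\l|<1$, reduce to $(a_1,a_2) = (\pm 1, \mp 1)$ with $a_3 = E_\l$, and then derive a contradiction with $K(\x) = 0$. The only cosmetic difference is that you substitute directly into the curvature formula $(\ref{Kcal})$ to get $K(\x) = -4\pi^2/b_3^2 < 0$, while the paper substitutes into the equivalent vanishing condition $(\ref{vani})$ to get $E_\l^2 = 1$.
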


\begin{proof}
If $b_1(\x)=b_2(\x)=0$, then we have $|a_1(\x)|=|a_2(\x)|=1$. It follows that $a_1(\x)=a_2(\x)=\pm 1$ does not hold since these imply $a_3(\x)=E_{\l}\mp 2\notin [-1,1]$, which is a contradiction. Thus we have $(a_1(\x), a_2(\x))=(\pm 1, \mp 1)$ and $a_3(\x)=E_{\l}$ by $(\ref{Fercal})$. Using $(\ref{vani})$, we conclude $E_{\l}^2=1$, which is contradicts to $E_{\l}\in (-1,1)$.
\end{proof}

\begin{lem}\label{nullevlem}
Suppose $E_{\l}\in (-1,1)$. For $\x\in K^{-1}(\{0\})\cap M_{\l}\setminus\{a_1=0\}\cup \{a_2=0\}$ with $b_3(\x)\neq 0$, we have
\begin{align}\label{Beveq}
B(\x')\begin{pmatrix}
c_1\\
c_2
\end{pmatrix}=0,
\end{align}
where we write $\x=(\x', f_{\l}(\x'))\in M_{\l}$. In particular, the vector $(c_1,c_2)$ is the eigenvector of the matrix $B$ with $0$-eigenvalue at $\x'$, where we note that $(c_1(\x'),c_2(\x'))\neq (0,0)$ by virtue of Lemma \ref{b_12nonvani} (and by $E_{\l}\neq 0$). 

\end{lem}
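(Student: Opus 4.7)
The plan is to directly verify the identity $B(\x')(c_1,c_2)^{T}=0$ by a one-line algebraic computation using $c_j=b_j/a_j$, and then invoke Lemma \ref{b_12nonvani} to ensure that the vector $(c_1,c_2)$ is nonzero.

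Concretely, I would factor out the common prefactor $-2\pi/b_3^3$ from the definition of $B(\x')$ and compute the first component of $B(\x')(c_1,c_2)^{T}$ as
\begin{align*}
(a_1 b_3^2+a_3 b_1^2)c_1 + a_3 b_1 b_2\, c_2
&= \tfrac{b_1}{a_1}(a_1 b_3^2+a_3 b_1^2) + \tfrac{b_2}{a_2}\, a_3 b_1 b_2 \\
&= \tfrac{b_1}{a_1 a_2}\bigl(a_1 a_2 b_3^2 + a_2 a_3 b_1^2 + a_1 a_3 b_2^2\bigr).
\end{align*}
The expression in the parentheses is precisely the numerator appearing in the formula (\ref{Kcal}) for the Gaussian curvature $K(\x)$, and it vanishes by the hypothesis $\x\in K^{-1}(\{0\})$. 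An entirely analogous computation (with $b_1\leftrightarrow b_2$, $a_1\leftrightarrow a_2$) shows that the second component of $B(\x')(c_1,c_2)^{T}$ is $\frac{b_2}{a_1 a_2}(a_1 a_2 b_3^2 + a_2 a_3 b_1^2 + a_1 a_3 b_2^2)$, which likewise vanishes. This establishes (\ref{Beveq}).

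Since we assume $a_1(\x)\neq 0$ and $a_2(\x)\neq 0$, the quantities $c_1,c_2$ are well-defined, and the same assumption turns the nontriviality of $(c_1,c_2)$ into the nontriviality of $(b_1,b_2)$. The latter is guaranteed by Lemma \ref{b_12nonvani}, which covers exactly the regime $E_\l\in(-1,1)$ and $\x\in K^{-1}(\{0\})\cap M_\l$ considered here. Therefore $(c_1,c_2)\neq (0,0)$, so it is a genuine null eigenvector of $B(\x')$.

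There is really no hard step: the whole argument is a short algebraic simplification designed to recognize the numerator of $K$ inside $B(\x')(c_1,c_2)^{T}$. The only thing to be careful about is bookkeeping the excluded sets ($\{a_1=0\}$, $\{a_2=0\}$ and $\{b_3=0\}$) so that $c_1,c_2$ and the matrix $B(\x')$ are all defined; these are precisely the hypotheses of the lemma.
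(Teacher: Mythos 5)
Your proof is correct and takes essentially the same approach as the paper: a direct algebraic verification that each component of $B(\x')(c_1,c_2)^{T}$ vanishes. The only difference is in bookkeeping. The paper routes the computation through Lemma \ref{Gauperp}, rewriting the first component as $b_1\bigl(b_3^2 + a_3(b_1c_1+b_2c_2)\bigr)$ and then substituting $b_1c_1+b_2c_2 = -b_3c_3$, which collapses it to $b_1(b_3^2 - a_3 b_3 c_3) = 0$; you instead factor the component directly as $\tfrac{b_1}{a_1 a_2}(a_1a_2b_3^2 + a_2a_3b_1^2 + a_1a_3b_2^2)$ and recognize the parenthesis as the numerator of $K$ in (\ref{Kcal}), which vanishes by hypothesis. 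These are two ways of exploiting the same identity (the proof of Lemma \ref{Gauperp} is precisely this factorization divided by $a_1a_2a_3$). A small advantage of your route is that it never divides by $a_3$ nor introduces $c_3$, so it does not need $a_3\neq 0$ — a hypothesis that Lemma \ref{Gauperp} nominally requires but the statement of Lemma \ref{nullevlem} does not impose. Your handling of the nontriviality of $(c_1,c_2)$ via Lemma \ref{b_12nonvani} and the assumptions $a_1,a_2\neq 0$ also matches the paper.
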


\begin{proof}
By virtue of $(\ref{Bdef})$, it suffices to prove
\begin{align*}
c_1(a_1b_3^2+a_3b_1^2)+c_2b_1b_2a_3=0,\quad c_1b_1b_2a_3+c_2(a_2b_3^2+a_3b_2^2)=0.
\end{align*}
Using the relation $c_ja_j=b_j$ and Lemma \ref{Gauperp}, we have
\begin{align*}
c_1(a_1b_3^2+a_3b_1^2)+c_2b_1b_2a_3=&b_1\left(b_3^2+(b_1c_1+b_2c_2)a_3\right)\\
=&b_1(b_3^2-a_3b_3c_3)=0.
\end{align*}
The equation $c_1b_1b_2a_3+c_2(a_2b_3^2+a_3b_2^2)=0$ is similarly proved.
\end{proof}

Now Proposition \ref{nullev} immediately follows from Lemma \ref{nullevlem}.


\begin{thebibliography}{99}






\bibitem{C2} J. C. Cuenin, Eigenvalue estimates for bilayer graphene. Ann. Henri Poincar\'e 20 (2019), no. 5, 1501--1516.

\bibitem{ES} L. Erd\"os, M. Salmhofer, Decay of the Fourier transform of surfaces with vanishing curvature. Math. Z. 257, (2007), 261--294. 

\bibitem{F} R. L. Frank. Eigenvalue bounds for Schr\"odinger operators with complex potentials. Bull. Lond. Math. Soc., 43(4):745--750, 2011.







\bibitem{IM} I.A. Ikromov, D. M\"uller, Uniform Estimates for the Fourier Transform of Surface Carried Measures in $\re^3$ and an Application to Fourier Restriction, J Fourier Anal Appl 17, (2011), 1292--1332. 


\bibitem{JKL} E. Jeong, Y. Kwon, and S. Lee, Uniform Sobolev inequalities for second order non-elliptic differential operators, Adv. Math. 302 (2016), 323--350.


\bibitem{KY} T. Kato, K. Yajima, Some examples of smooth operators and the associated smoothing effect. Rev. Math. Phys. 1 (1989), no. 4, 481--496.

\bibitem{KT} M. Keel and T. Tao, Endpoint Strichartz estimates, Amer. J. Math. 120 (1998), no. 5, 955--
980.


\bibitem{KM} E. Korotyaev, J. M\o ller, Weighted estimates for the discrete Laplacian on the cubic lattice. Arkiv foer Matematik, Vol. 57, No. 2,(2019), 397--428.






\bibitem{KRS} C. E. Kenig, A. Ruiz, C. D. Sogge, Uniform Sobolev inequalities and unique continuation for second order constant coefficient differential operators. Duke Math. J. 55 (1987), no. 2, 329--347.




\bibitem{RS} M. Reed, B. Simon, {\it The Methods of Modern Mathematical 
Physics}, Vol.\ I--IV.  Academic Press, 1972--1980. 


\bibitem{SK} A. Stefanov, P. G. Kevrekidis, Asymptotic behaviour of small solutions for the discrete nonlinear Schr\"odinger and Klein-Gordon equations. Nonlinearity 18 (2005), no. 4, 1841--1857.

\bibitem{S} E. M. Stein. Harmonic analysis: real-variable methods, orthogonality, and oscillatory integrals, volume 43 of Princeton Mathematical Series. Princeton University Press, Princeton, NJ, 1993. With the assistance of Timothy S. Murphy, Monographs in Harmonic Analysis, III.

\bibitem{TT} Y. Tadano, K. Taira, Uniform bounds of discrete Birman-Schwinger operators.  Trans. Amer. Math. Soc. 372 (2019), 5243--5262.

\bibitem{T} K. Taira, Limiting absorption principle on $L^p$-spaces and scattering theory, preprint, arXiv:1904.00505.

\bibitem{V} A. N. Var\u{c}enko, Newton polyhedra and estimates of oscillatory integrals, Funkcional. Anal. i Prilo\u{z}en. 10 (1976), no. 3, 13--38. MR0422257.












\end{thebibliography}
\end{document}